\numberwithin{equation}{section}
\newtheorem{thm}{Theorem}[section]
\newtheorem{lemma}[thm]{Lemma}
\newtheorem{remark}[thm]{Remark}
\newtheorem{prop}[thm]{Proposition}
\newtheorem{example}[thm]{Example}
\author[1]{Zihao Gu}
\author[1]{Yiqing Lin}
\author[,1]{Kun Xu\footnote{Corresponding author. Email address: \url{1949101x_k@sjtu.edu.cn} (K. Xu).}}
\affil[1]{\small{School of Mathematical Sciences, Shanghai Jiao Tong University, 200240 Shanghai, China.}}
\begin{document}

\title{Mean reflected BSDE driven by a marked point process and application in insurance risk management}

\date{October 15, 2023}
\maketitle
%
%
%
\begin{abstract}

This paper aims to solve a super-hedging problem along with  insurance re-payment under running risk management constraints. The initial endowment for the super-heding problem is characterized by a class of mean reflected backward stochastic differential equation driven by a marked point process (MPP) and a Brownian motion. By Lipschitz assumptions on the generators and proper integrability on the terminal value, we give the well-posedness of this kind of BSDEs by combining a representation theorem with the fixed point argument.
\end{abstract}

{\bf Keywords:}  BSDEs with mean reflection, marked point process, super-hedging with insurance re-payment.

\section{Introduction}
Nonlinear backward stochastic differential equations (BSDEs) driven by a Brownian motion were first introduced by Pardoux and Peng \cite{Pardoux1990AdaptedSO} in 1990. Since then, varies of applications of BSDEs have been investigated by many researchers, see for instance \cite{Karouibackward1997,MR3154654} for financial applications and \cite{MR3308895} for applicatitons in PDEs. In view of the structure generalization of standard BSDEs, the Wiener process in the diffusion term was replaced by more general martingales, see e.g. \cite{MR1752673,Carbone2008BackwardSD,cohen2012existence} for more detailed discussion.  In particular, a class of BSDEs driven by a random measure associated with a marked point process have aroused a lot of attention. We refer \cite{Bremaud1981,last1995marked, Jacobsen2006} as general references on marked point processes. In spite of these literature, BSDEs driven by a random measure has been introduced in \cite{Tang_1994,Barles_1997,Royer_2006,Kharroubi_2010}, with  applications in  stochastic maximum principle, nonlocal partial differential equations, nonlinear expectation, quasi-variational inequalities and impulse controls. Firstly, the jump part of all aforementioned BSDEs is considered as a Poisson-type random measure. General random measures beyond Poisson were considered in \cite{Xia2000}. In particular, the well-posedness of BSDEs driven by general marked point processes were investigated in \cite{Confortola2013} for the weighted-$L^2$ solution,  \cite{Becherer_2006,Confortola_2014} for the $L^2$ case,  \cite{Confortola2016} for the $L^1$ case and \cite{Confortola_2018} for the $L^p$ case.

Moreover, \cite{El_Karoui_1997} studied reflected backward stochastic differential equations (RBSDEs) in order to solve an obstacle problem for PDEs. Thereafter, generalizations on the structure of the diffusion term are followed. In addition to the Wiener process, a very general marked point process, which is non-explosive and has totally inaccessible jumps, is added to the diffusion term in \cite{foresta2021optimal}. 

In contrast to the pointwisely reflected BSDEs, mean reflected BSDEs were introduced in \cite{briand2018bsdes} when dealing with super-hedging problems under running risk management constraints. The formulation of the restricted BSDE reads,
\begin{equation}
\label{original}
\left\{\begin{array}{l}
 Y_t=\xi+\int_t^T f\left(s, Y_s, Z_s\right) d s-\int_t^T Z_s d W_s+K_T-K_t, \quad 0 \leq t \leq T, \\
\mathbb{E}\left[\ell\left(t, Y_t\right)\right] \geq 0, \quad 0 \leq t \leq T ,
\end{array}\right.
\end{equation}
where  $K$ is a deterministic process and $\ell$ is  a running loss function. The well-posedness of  such BSDEs with mean reflection was generalized in \cite{Hibon2017quadratic, hu2022general}, with quadratic generator and bounded or unbounded terminal condition. The following type of Skorokhod condition,
$$
\int_0^T \mathbb{E}\left[\ell\left(t, Y_t\right)\right] d K_t=0,
$$
ensures the existence and uniqueness of the so-called deterministic flat solution.

This paper generalizes the mean reflected BSDEs by allowing a random measure in the diffusion part. A similar formulation of mean reflected SDEs with jumps can be found in \cite{Briand_2020}. In the light of \cite{briand2018bsdes,Hibon2017quadratic,hu2022general}, we consider the following mean reflected BSDEs driven by a marked point process,
\begin{equation}
\label{eq01}
\left\{\begin{array}{l}
Y_t =\xi+\int_t^T f\left(s, Y_s, U_s\right) d A_s+\int_t^T g\left(s, Y_s, Z_s\right) d s\\
\quad \quad-\int_t^T \int_E U_s(e) q(d s d e)-\int_t^T Z_s d W_s + \left(K_T-K_t\right) , \quad \forall t \in[0, T] \text { a.s. };\\
\mathbb{E}\left[\ell\left(t, Y_t\right)\right] \geq 0, \quad \forall t \in[0, T] .
\end{array}\right.
\end{equation}
Here $W$ is a $d$-dimensional Brownian motion and $q$, independent from $W$, is a compensated integer random measure corresponding to some marked point process $(T_n,\zeta_n)_{n\ge 0}$. The process $A$ is the dual predictive projection of the event counting process related to the marked point process. We emphasize that under our assumptions, as in \cite{foresta2021optimal}, the process $A$ is continuous and increasing which is not necessarily absolutely continuous with respect to the Lebesgue measure.
With the help of a fixed point technique, we construct the well-posedness of (\ref{eq01}) in weighted-$L^2$ spaces with a weight of the form $e^{\beta A_t}$. The generators $f$ and $g$ are assumed being Lipschitz and the terminal condition $\xi$ is of proper integrability.

Compared with BSDEs only driven by a Brownian motion, for example in \cite{briand2018bsdes}, delicate techniques are used to build the well-posedness of mean reflected BSDEs (\ref{original}) with the discountinous setting. In particular, the spaces where the solutions live in are quite different, which bring difficulties in the construction of a contracting mapping. Eventually, we apply the fixed-point argument with the help of a priori estimates and the small time interval stitching technique.

When it comes to the financial applications of BSDEs, as in El Karoui, Peng and Quenez \cite{Karouibackward1997}, classical BSDEs can be applied to the pricing of European contingent claims. Therein, the components $Y$ and $Z$ of the solution are related to the value process of the claim and its hedging strategy, respectively. Afterwards, 
El Karoui, Pardoux and Quenez \cite{el1997reflected} studied  the price of an American option which can be formulated as the “minimal” solution to the following BSDE: 
\begin{equation}
\label{reflected}
\left\{\begin{array}{l}
Y_t=\xi+\int_t^T f\left(s, Y_s, Z_s\right) d s-\int_t^T Z_s d W_s+K_T-K_t, \quad 0 \leq t \leq T, \\
Y_t \geq L_t, \quad 0 \leq t \leq T .
\end{array}\right.
\end{equation}
where $K$ is adapted and non-decreasing. More recently, as mentioned before, unlike the pointwisely reflected BSDEs,  Briand, Elie and Hu \cite{briand2018bsdes} have formulated mean reflected BSDEs (\ref{original}) which can be used to solve super-hedging problems under running risk management constraints.

In this paper, we consider a financial application in a more general discontinuous framework. Our main contribution is to help insurance companies hedge mortality risk under a risk management constraint through solving the mean reflected BSDE (\ref{eq01}). Under our setting, each company is assumed to have such an insurance payment process:
\begin{equation}
\label{eq insurance intro}
\begin{aligned}
P_t= & \int_0^t(n-N_s) {H}_s d s+\int_0^t\int_{E}{G}_s(e)p(dsde), \quad 0 \leq t \leq T,
\end{aligned}
\end{equation}
in which $H$ is a deterministic process representing the insurance premium received continuously during the period of the contract and $G$ is a deterministic death benefit paid at random times triggered by a marked point process $p$. Moreover, the company faces a terminal payoff $(n-N_T) {F}$ at maturity $T$, where the random variable $F$ is a survival benefit paid at the end of the contract. We model the mortality by a marked point process since We take into consideration the different causes of death, such as  natural death, traffic accident, sudden illness and etc. The causes of death are marked by a marker in a finite mark space $E$.
In order to hedge the  mortality risk, mortality derivatives have been introduced in global financial markets. Thus, we assume that the
financial market consists of the bank account, the stock and a mortality bond. The insurer can use a mortality bond to hedge death and survival benefits from its portfolio since pay-offs from mortality bonds are contingent on the mortality experience in a population, see Blake et al. \cite{blake2009longevity} and Wills and Sherris \cite{wills2010securitization}. Besides, the constraint in (\ref{eq01}) can be replaced by a more general version of the form
\begin{equation}
\label{crho}
\rho(t,Y_t)\le q_t,\ 0\le t\le T,
\end{equation}
where $\{\rho(t,\cdot)\}_{0\leq t \leq T}$ is a time indexed collection of static risk measures, and $\{q_t\}_{0\leq t \leq T}$ are associated benchmark levels. Consider portfolios $X^{x, \pi,\chi, K}$ whose dynamics is given by
$$
\begin{aligned}
d X_t^{x, \pi,\chi, K} & = \pi_t(\mu_t d t+\sigma_t d W_t)+\chi_t\frac{dD_t}{D_{t^-}}+\left(X_t^{x, \pi,\chi, K}-\pi_t-\chi_t\right) r_t d t-dP_t-d K_t
\end{aligned}
$$
where $x$ is a given initial capital, $\pi$ and $\chi$ denote the amount of wealth invested in the stock $S$ and in the mortality bond $D$.
By changing the measure, eventually, we are able to  tackle the super-hedging problem by solving a BSDE with risk measure reflection.

The rest of the paper is organized as follows. In Section \ref{sec P}, we first recall some basic notations on marked point processes and some results on BSDEs driven by a marked point process. In Section \ref{sec F}, we present the problem which we are going to solve. Section \ref{sec B} is devoted to the well-posedness of mean reflected BSDEs driven by a marked point process with fixed generators. The general case in which the generators depend on the solution is discussed in Section \ref{sec ger}. We end this paper by solving a super-hedging problem with an insurance re-payment under a risk management constraint in Section \ref{sec app}. 

\section{Preliminaries}
\label{sec P}

\subsection{Notations on marked point process}

We first recall some notions about marked point processes. More details can be found in \cite{foresta2021optimal, Bremaud1981, last1995marked, cohen2012existence}. 

In this paper we assume that $(\Omega, \mathscr{F}, \mathbb{P})$ is a complete probability space and $E$ is a Borel space. We call $E$ the mark space and  $\mathscr{E}$ is its Borel $\sigma$-algebra. Given a sequence of random variables $(T_n,\zeta_n)$ taking values in $[0,\infty]\times E$, set $T_0=0$ and $\mathbb P-a.s.$ 
\begin{itemize}
\item $T_n\le T_{n+1},\ \forall n\ge 0;$
\item $T_n<\infty$ implies $T_n<T_{n+1} \ \forall n\ge 0.$
\end{itemize}
The sequence $(T_n,\zeta_n)_{n\ge 0}$ is called a marked point process (MPP). Moreover, we assume the marked point process is non-explosive, i.e., $T_n\to\infty,\ \mathbb P-a.s.$

Define a random discrete measure $p$ on $((0,+\infty) \times E, \mathscr{B}((0,+\infty) \otimes \mathscr{E})$ associated with each MPP:
\begin{equation}
\label{eq p}
    p(\omega, D)=\sum_{n \geq 1} \mathbf{1}_{\left(T_n(\omega), \zeta_n(\omega)\right) \in D} .
\end{equation} 
For each $\tilde C \in \mathscr{E}$, define the counting process $N_t(\tilde C)=p((0, t] \times \tilde C)$ and denote $N_t=N_t(E)$. Obviously, both are right continuous increasing process starting from zero.  Define for $t \geq 0$
$$
\mathscr{G}_t^0=\sigma\left(N_s(\tilde C): s \in[0, t], \tilde C \in \mathscr{E}\right)
$$
and $\mathscr{G}_t=\sigma\left(\mathscr{G}_t^0, \mathscr{N}\right)$, where $\mathscr{N}$ is the family of $\mathbb{P}$-null sets of $\mathscr{F}$. 
Note by $\mathbb{G}=\left(\mathscr{G}_t\right)_{t \geq 0}$ the  completed filtration generated by the MPP, which is right continuous and satisfies the usual hypotheses.  Given a standard Brownian motion $W\in \mathbb R^d$, independent with the MPP, let $\mathbb F=(\mathcal F_t)$ be the completed filtration generated by the MPP and $W$, which satisfies the usual conditions as well.

Each marked point process has a unique compensator $v$, a predictable random measure such that
$$
\mathbb{E}\left[\int_0^{+\infty} \int_E C_t(e) p(d t d e)\right]=\mathbb{E}\left[\int_0^{+\infty} \int_E C_t(e) v(d t d e)\right]
$$
for all $C$ which is non-negative and $\mathscr{P}^{\mathscr{G}} \otimes \mathscr{E}$-measurable, where $\mathscr{P}^{\mathscr{G}}$ is the $\sigma$-algebra generated by $\mathscr{G}$-predictable processes. Moreover, in this paper we always assume that there exists a function $\phi$ on $\Omega \times[0,+\infty) \times \mathscr{E}$ such that $\nu(\omega, d t d e)=\phi_t(\omega, d e) d A_t(\omega)$, where $A$ is the dual predictable projection of $N$. In other words, $A$ is the unique right continuous increasing process with $A_0=0$ such that, for any non-negative predictable process $D$, it holds that,
$$
\mathbb E\left[\int_{0}^\infty D_t dN_t\right]=E\left[\int_{0}^\infty D_t dA_t\right].
$$
In addition, the kernel $\phi$ satisfies,
\begin{itemize}
\item for each $(w,t)\in \Omega\times [0,\infty)$, $\phi_t(w,\cdot)$ is a probability measure on $(E,\mathcal E)$;
\item for each $\tilde C\in \mathcal E$, $\phi_t(\tilde C)$ is predictable. 
\end{itemize}

\begin{example}[\cite{schonbucher2003credit}]
\label{ex1}
We list some marked point processes and their compensator measures in this example, which can be found in \cite[$\S 4.7$]{schonbucher2003credit}.
\begin{itemize}
\item Poisson Process $N_t$ with intensity $\lambda_t$ is a special marked point process with $E=\{1\}$. The compensator measure is $\nu(d e, d t)=\delta_{\zeta=1}(d e) \lambda_t d t$, and $d A_t=\lambda_t d t, \phi_t( de)=\delta_{\zeta=1}(de)$.
\item For a marked inhomogeneous Poisson Process with  marker $\zeta$ follows $N(0,1)$, the  compensator measure is $\nu(d e, d t)=\frac{1}{\sqrt{2 \pi}} e^{-1 / 2 e^2} \lambda_t d e d t$ and $d A_t=\lambda_t d t, \phi_t(d e)=\frac{1}{\sqrt{2 \pi}} e^{-1 / 2 e^2} d e$.
\item Another frequently used marked Poisson point process in financial modeling is that the marker $\zeta$ is the value of another stochastic process $S(t)$ at the time of the jump while $S(t)$ follows a geometric Brownian motion with $S(t)$ is observable at time $t$ :
$$
\frac{d S}{S}=\alpha d t+\sigma d W.
$$
The compensator measure reads,
$$
v(d e, d t)=\delta_{\zeta=S(t-)}(d e) \lambda_t d t ,
$$
which has no substaintial difference from the usual Poisson process due to the fact that the value of the marker is predictable. It follows that,
$
d A_t=\lambda_t d t, \phi_t(d e)=\delta_{\zeta=S(t-)}(d e).
$
\end{itemize}
\end{example}

Fix a terminal time $T>0$, we can define the integral
$$
\int_0^T \int_E C_t(e) q(d t d e)=\int_0^T \int_E C_t(e) p(d t d e)-\int_0^T \int_E C_t(e) \phi_t(d e) d A_t,
$$
under the condition
$$
\mathbb{E}\left[\int_0^T \int_E\left|C_t(e)\right| \phi_t(d e) d A_t\right]<\infty .
$$
Indeed, the process $\int_0^{\cdot} \int_E C_t(e) q(d t d e)$ is a martingale. Note that  $\int_a^b$ denotes an integral on $(a, b]$ if $b<\infty$, or on $(a, b)$ if $b=\infty$.

For $\beta>0$, we introduce the following spaces.
\begin{itemize}
    \item $L^{r, \beta}(A)$ is the space of all $\mathbb{F}$-progressive processes $X$ such that
$$
\|X\|_{L^{r, \beta}(A)}^r=\mathbb{E}\left[\int_0^T e^{\beta A_s}\left|X_s\right|^r d A_s\right]<\infty .
$$
    \item $L^{r, \beta}(p)$ is the space of all $\mathbb{F}$-predictable processes $U$ such that
$$
\|U\|_{L^{r, \beta}(p)}^r=\mathbb{E}\left[\int_0^T \int_E e^{\beta A_s}\left|U_s(e)\right|^r \phi_s(d e) d A_s\right]<\infty.
$$
    \item $L^{r, \beta}\left(W\right)$ is the space of $\mathbb{F}$-progressive processes $Z$ in $\mathbb{R}^d$ such that
$$
\|Z\|_{L^{r, \beta}(W)}^r=\mathbb{E}\left[\int_0^T e^{\beta A_s}\left|Z_s\right|^r d s\right]<\infty.
$$
    \item $S^2_*$ is the space of all $\mathbb{F}$-progressive processes $Y$ such that
$$
\|Y\|_{S^2_*}^2 = \sup_{0 \leq t \leq T} \mathbb{E} \left[  Y_{t}^2 \right] < \infty.
$$
    \item $\mathscr{A}_D$ is the space of all càdlàg non-decreasing deterministic processes $K$ starting from the origin, i.e. $K_0 = 0$.
\end{itemize}

\begin{remark}
As in \cite{Confortola2013}, we say that $X, X^{\prime} \in L^{r, \beta}(A)$ (respectively, $U, U^{\prime} \in L^{r, \beta}(p)$) are equivalent if they coincide almost everywhere with respect to the measure $d A_t(\omega) \mathbb{P}(d \omega)$ (respectively, the measure $\phi_t(\omega, d y) d A_t(\omega) \mathbb{P}(d \omega)$)  and this happens if and only if $\|X-X'\|_{L^{r, \beta}(A)}=0$ (respectively, $\|U-U'\|_{L^{r, \beta}(p)}=0$). With a little abuse of notation, we  still denote $L^{r, \beta}(A)$ (respectively, $L^{r, \beta}(p)$) the corresponding set of equivalence classes, endowed with the norm $\|\cdot\|_{L^{r, \beta}(A)}$ (respectively, $\|\cdot\|_{L^{r, \beta}(p)}$). In addition, both $L^{r, \beta}(A)$ and $L^{r, \beta}(p)$ are Banach spaces. 

\end{remark}

\subsection{Results on BSDEs driven by a marked point process}

The BSDE driven by a marked point process under consideration without reflection constraint is formulated as follows:
\begin{equation}
\label{eq standard}
 Y_t=\xi+\int_t^T f\left(s, Y_s, U_s\right) d A_s+\int_t^T g\left(s, Y_s, Z_s\right) d s-\int_t^T \int_E U_s(e) q(d s d e)-\int_t^T Z_s d W_s, \quad \forall t \in[0, T] \text { a.s., }
\end{equation}
which is a special case of the reflected BSDE mentioned in \cite{foresta2021optimal}.
The solution of (\ref{eq standard}) is a triple $(Y, U, Z)$ that lies in $L^{2, \beta}(A) \cap L^{2, \beta}(W) \times L^{2, \beta}(p) \times$ $L^{2, \beta}(W)$ for some $\beta>0$, with $Y$ c$\grave{a}$dl$\grave{a}$g. 

Hereafter, we are ready to state the general assumptions that will be used throughout the paper.\\[5pt]

\noindent\textbf{Assumption $\left(\mathbf{I}\right)$} The process A is continuous.\\[5pt]

The first assumption is on the dual predictable projection $A$ of the counting process $N$ relative to $p$. We would like to emphasize that for $A_t$, we do not require absolute continuity with respect to the Lebesgue measure. A celebrated example of interest in the theory of credit risk is given in \cite[Example 2.1]{janson2011absolutely}. For the readers' convenience, we restate this example below.
\begin{example}[\cite{janson2011absolutely}]
\label{ex2}
A special point process $1_{\{t\ge R\}}$, in which $R$ is a stopping time defined later, is considered in this example. 

Let $B$ be a standard one dimensional Brownian motion with natural filtration $\mathbb{F}$ and with a local time at zero $L=\left(L_t\right)_{t \geq 0}$. Define the change of time
$$
\tau_t=\inf \left\{s>0: L_s>t\right\} .
$$
Then $\left(\tau_t\right)_{t \geq 0}$ is a family of $\mathbb{F}$ stopping times. Form the time changed filtration $\mathbb{G}$ given by $\mathcal{G}_t=\mathcal{F}_{\tau_t}$ for $t \geq 0$. Enlarge $\mathbb G$ to $\mathbb H$ in order to support an independent Poisson process $N$ whose intensity $\lambda=1$. Then the family $\left(L_t\right)_{t \geq 0}$ are stopping times for $\mathbb{H}$, and
$N_{L_t}-L_t$ is a local martingale for the filtration $\tilde{\mathbb{H}}$
where $\left(\tilde{\mathcal{H}}_t\right)=\left(\mathcal{H}_{L_t}\right)_{t \geq 0}$. 

Since $L$ is Brownian local time at zero, it has paths which are singular with respect to Lebesgue measure, a.s. However by Tanaka's formula,
\begin{equation}
\label{Tanaka}
\left|B_t\right|=\int_0^t \operatorname{sign}\left(B_s\right) d B_s+L_t,
\end{equation}
which implies,
$\mathbb E\left(L_t\right)=\mathbb E\left(\left|B_t\right|\right)=\sqrt{\frac{2}{\pi}} \sqrt{t}$.

Next denote
$$
R=\inf \left\{s>0: N_{L_s} \geq 1\right\},
$$
which is a stopping time for the filtration $\tilde{\mathbb{H}}$. Then $1_{\{t \geq R\}}-L_{t \wedge R}$ is a martingale for the filtration $\tilde{\mathbb{H}}$. Hence, for the filtration $\tilde{\mathbb{H}}$, the dual predictive projection for $1_{\{t \geq R\}}$ is $L_{t \wedge R}$, which is continuous but singular with respect to the Lebesgue measure. $1_{\{t \geq R\}}$ can be viewd as a special marked point process with marker $E=\{1\}$, whose compensator measure is $v(de,dt)=\delta_{\zeta=1}(de)dL_{t\wedge R}$. Without much difficulty, we are able to construct similar processes in Example \ref{ex1} by replacing Poisson process $N_t$ with $1_{\{t\ge R\}}$.
\end{example}

\noindent\textbf{Assumption $\left(\mathbf{I I}\right)$}\\
(i) The final condition $\xi: \Omega \rightarrow \mathbb{R}$ is $\mathscr{F}_T$-measurable and
$$
\mathbb{E}\left[e^{\beta A_T} \xi^2\right]<\infty;
$$
(ii) For every $\omega \in \Omega, t \in[0, T], r \in \mathbb{R}$ a mapping
$$
f(\omega, t, r, \cdot): L^2\left(E, \mathscr{E}, \phi_t(\omega, d e)\right) \rightarrow \mathbb{R}
$$
is given and satisfies the following:

(a) For every $U \in L^{2, \beta}(p)$ the mapping
$$
(\omega, t, r) \mapsto f\left(\omega, t, r, U_t(\omega, \cdot)\right)
$$

\quad \quad is $\mathscr{P} \otimes \mathscr{B}(\mathbb{R})$-measurable, where $\mathscr{P}$ denotes the progressive $\sigma$-algebra with respect to $\mathbb F$.

(b) There exist $L_f \geq 0, L_p \geq 0$ such that for every $\omega \in \Omega, t \in[0, T], y, y^{\prime} \in \mathbb{R}$, $u, u^{\prime} \in L^2\left(E, \mathscr{E}, \phi_t(\omega, d e)\right)$ we have
$$
\begin{aligned}
& \left|f(\omega, t, y, u(\cdot))-f\left(\omega, t, y^{\prime}, u^{\prime}(\cdot)\right)\right| \leq L_f\left|y-y^{\prime}\right|+L_p\left(\int_E\left|u(e)-u^{\prime}(e)\right|^2 \phi_t(\omega, d e)\right)^{1 / 2}.
\end{aligned}
$$

(c) We have
$$
\mathbb{E}\left[\int_0^T e^{\beta A_s}|f(s, 0,0)|^2 d A_s\right]<\infty.
$$
(iii) The mapping $g: \Omega \times[0, T] \times \mathbb{R} \times \mathbb{R}^d \rightarrow \mathbb{R}$ is given

(a) $g$ is $\mathscr{P} \times \mathscr{B}(\mathbb{R}) \times \mathscr{B}\left(\mathbb{R}^d\right)$ measurable.

(b) There exist $L_g \geq 0, L_w \geq 0$ such that for every $\omega \in \Omega, t \in[0, T], y, y^{\prime} \in \mathbb{R}$, $z, z^{\prime} \in \mathbb{R}^d$
$$
\left|g(\omega, t, y, z)-g\left(\omega, t, y^{\prime}, z^{\prime}\right)\right| \leq L_g\left|y-y^{\prime}\right|+L_w\left|z-z^{\prime}\right|.
$$

(c) We have
$$
\mathbb{E}\left[\int_0^T e^{\beta A_s}|g(s, 0,0)|^2 d s\right]<\infty.
$$

The well-posedness of BSDE (\ref{eq standard}) inherits from \cite[Proposition 3.3]{foresta2021optimal}. 

\begin{prop}[\cite{foresta2021optimal}]
\label{prop2}
    Let assumptions $(\mathbf{I})$ and $\left(\mathbf{II}\right)$ hold for some $\beta > L_p^2 + 2L_f$, then BSDE (\ref{eq standard}) admits a unique solution $(Y, U, Z) \in L^{2, \beta}(A) \cap$ $L^{2, \beta}(W) \times L^{2, \beta}(p) \times L^{2, \beta}(W)$, with $Y$ c$\grave{a}$dl$\grave{a}$g.
\end{prop}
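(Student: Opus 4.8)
The plan is to prove existence and uniqueness by a Banach fixed-point argument carried out directly in the product space $\mathcal{H}^\beta := \left(L^{2,\beta}(A)\cap L^{2,\beta}(W)\right)\times L^{2,\beta}(p)\times L^{2,\beta}(W)$, which is a Banach space since each factor is one of the complete weighted $L^2$ spaces introduced above. The cornerstone is a martingale representation theorem for the filtration $\mathbb{F}$ generated jointly by the independent pair $(W,p)$: every square-integrable $\mathbb{F}$-martingale $M$ admits a unique decomposition $M_t = M_0 + \int_0^t Z_s\,dW_s + \int_0^t\int_E U_s(e)\,q(ds\,de)$ with $(Z,U)\in L^{2,\beta}(W)\times L^{2,\beta}(p)$. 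I would invoke this (in the spirit of \cite{Confortola2013}, using the independence of $W$ and the MPP) as the engine that manufactures the integrand pair $(Z,U)$ out of a terminal random variable.

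With the representation in hand, I would define the solution map $\Gamma$ by freezing the generator arguments: given $(y,u,z)\in\mathcal{H}^\beta$, form the closed-loop terminal payoff
\[
\xi + \int_0^T f(s,y_s,u_s)\,dA_s + \int_0^T g(s,y_s,z_s)\,ds,
\]
take its $\mathbb{F}$-conditional expectation to obtain a square-integrable càdlàg martingale $M$, read off $(U,Z)$ from the representation, and set $Y_t := M_t - \int_0^t f(s,y_s,u_s)\,dA_s - \int_0^t g(s,y_s,z_s)\,ds$. Assumption $(\mathbf{II})$ — the measurability in (a), the Lipschitz bounds in (b), and the square-integrability-at-zero in (c) for both $f$ and $g$ — guarantees that the frozen drivers lie in $L^{2,\beta}(A)$ and $L^{2,\beta}(W)$ respectively, so that $\Gamma(y,u,z)=(Y,U,Z)\in\mathcal{H}^\beta$. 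Moreover $Y$ is càdlàg, because $M$ is càdlàg (a martingale of the right-continuous $\mathbb{F}$) while the two finite-variation compensators are continuous: Assumption $(\mathbf{I})$ makes $A$ continuous, hence $\int f\,dA$ and $\int g\,ds$ are continuous. A fixed point of $\Gamma$ is exactly a solution of (\ref{eq standard}).

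The heart of the matter is the a priori estimate showing $\Gamma$ contracts. For two inputs I would apply Itô's formula to $e^{\beta A_t}\,|\Delta Y_t|^2$, where $\Delta Y = Y^1 - Y^2$, and take expectations; the Brownian and compensated-measure stochastic integrals are genuine martingales and drop out, leaving on the left the weighted norms $\beta\,\mathbb{E}\int e^{\beta A_s}|\Delta Y_s|^2\,dA_s$, $\mathbb{E}\int e^{\beta A_s}|\Delta Z_s|^2\,ds$ and $\mathbb{E}\int_E\int e^{\beta A_s}|\Delta U_s(e)|^2\phi_s(de)\,dA_s$, and on the right the cross terms $2\,e^{\beta A_s}\Delta Y_{s^-}\Delta f_s\,dA_s$ and $2\,e^{\beta A_s}\Delta Y_{s^-}\Delta g_s\,ds$, where $\Delta f,\Delta g$ are the generator differences. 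Estimating these with the Lipschitz constants of $(\mathbf{II})$ and splitting by Young's inequality, the $f$-driven part contributes a multiple $(L_p^2 + 2L_f)$ of $\int e^{\beta A_s}|\Delta Y_s|^2\,dA_s$ ($L_p^2$ from the $u$-term paired against the output $\Delta U$, the factor $2L_f$ from the $y$-term), which is precisely what the hypothesis $\beta > L_p^2 + 2L_f$ allows the gain term $\beta\int e^{\beta A_s}|\Delta Y_s|^2\,dA_s$ to absorb with strictly positive slack, leaving the remaining input pieces as a strict contraction factor.

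The delicate point — and what I expect to be the main obstacle — is the coexistence of two time scales: $f$ is integrated against the continuous, possibly Lebesgue-singular measure $dA$, while $g$ is integrated against $ds$, and the exponential weight $e^{\beta A}$ enlarges only the $dA$-gain term. Consequently the $g$-driven $ds$-terms cannot be soaked up by increasing $\beta$, which is exactly why $L_g$ and $L_w$ do not appear in the threshold; I would instead isolate the $g$-contribution and close it with Gronwall's lemma in the ordinary time variable, using that $e^{\beta A}$ is nondecreasing. Handling the jump part of the Itô expansion correctly and reconciling it with the continuous compensator $A$ is the genuinely technical step; once done, $\Gamma$ is a strict contraction on $\mathcal{H}^\beta$ (iterating over finitely many subintervals if one step falls short of strict contractivity), and the Banach fixed-point theorem delivers the unique solution, with the càdlàg regularity of $Y$ inherited from the second paragraph. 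Alternatively, one may simply note that (\ref{eq standard}) is the non-reflected specialization of the reflected BSDE treated in \cite[Proposition 3.3]{foresta2021optimal}.
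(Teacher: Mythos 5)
Your sketch matches the route the paper indicates: the paper gives no proof of Proposition \ref{prop2} at all, importing it verbatim from \cite[Proposition 3.3]{foresta2021optimal} and remarking only that the martingale representation theorem for the joint filtration of $W$ and the MPP is the essential ingredient --- which is exactly the engine of your construction, and your closing sentence is literally the paper's entire argument. The one bookkeeping slip worth flagging is that in the frozen-coefficient contraction the Lipschitz $u$-term is paired against the \emph{input} difference (the output $\Delta U$ enters only the stability/uniqueness estimate for two solutions), but this does not affect the threshold $\beta > L_p^2 + 2L_f$ or the validity of the argument.
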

 The martingale representation theorem for c\`adl\`ag square integrable $\mathbb{F}$-martingale is essential when constructing the solution. We omit the proof for brevity.

\section{Formulation of the problem} 
\label{sec F}
In this paper, we consider the BSDE (\ref{eq standard}) with a mean reflected condition which is as follows: for some $\beta>0$,
\begin{equation}
\label{eq1}
\left\{\begin{array}{l}
Y_t =\xi+\int_t^T f\left(s, Y_s, U_s\right) d A_s+\int_t^T g\left(s, Y_s, Z_s\right) d s\\
\quad \quad-\int_t^T \int_E U_s(e) q(d s d e)-\int_t^T Z_s d W_s + \left(K_T-K_t\right) , \quad \forall t \in[0, T] \text { a.s. };\\
\mathbb{E}\left[\ell\left(t, Y_t\right)\right] \geq 0, \quad \forall t \in[0, T] ,\\
(Y, U, Z, K) \in L^{2, \beta}(A) \cap L^{2, \beta}(W) \times L^{2, \beta}(p) \times L^{2, \beta}(W) \times \mathcal{A}_D.
\end{array}\right.
\end{equation}

Consistently, the flatness is defined by the following Skorokhod condition:
$$
    \int_0^T \mathbb{E}\left[\ell\left(t, Y_{t^{-}}\right)\right] d K_t=0.
$$
Here, some requirements of the running loss function $\ell$ is needed.\\
\textbf{Assumption $\left(\mathbf{I I I}\right)$} $\ell:\Omega \times [0, T] \times \mathbb{R} \rightarrow \mathbb{R}$ satisfies the following properties:

1. $(t, y) \rightarrow \ell(t, y)$ is uniformly continuous, uniformly in $\omega$,

2. $\forall t \in[0, T], y \rightarrow \ell(t, y)$ is strictly increasing,

3. $\forall t \in[0, T], \lim_{x\to\infty} \mathbb{E}\left[ \ell(t, x)\right]>0$,

4. $\forall t \in[0, T], \forall y \in \mathbb{R},|\ell(t, y)| \leq C(1+|y|)$ for some constant $C \geq 0$.\\
\textbf{Assumption $\left(\mathbf{IV}\right)$} There exist two constants $\overline\kappa>\underline\kappa>0$ such that for each $t\in[0, T]$ and $y_1, y_2 \in \mathbb{R}$,
$$
\underline{\kappa}\left|y_1-y_2\right| \leq\left|\ell\left(t, y_1\right)-\ell\left(t, y_2\right)\right| \leq \overline\kappa\left|y_1-y_2\right| .
$$

In order to study mean reflected BSDEs, we construct the following map $L_t: L^{2, \beta}(A) \cap L^{2, \beta}(W) \rightarrow \mathbb{R}$ for each $t \in[0, T]:$
$$
L_t(\eta)=\inf \{x \geq 0: \mathbb{E}[\ell(t, x+\eta)] \geq 0\}, \quad \forall \eta \in L^{2, \beta}(A) \cap L^{2, \beta}(W).
$$
When assumption $\left(\mathbf{I I I }\right)$ is satisfied, the operator $X \mapsto L_t(X)$ is well-defined, similar with \cite{briand2018bsdes}.
\begin{remark}
\label{remark1}
Moreover, if assumption $\left(\mathbf{IV}\right)$ is also fulfilled, then for each $t \in[0, T]$, $\kappa:=\overline\kappa/\underline\kappa>1$,
\begin{equation}
\label{eqL}
\left|L_t\left(\eta^1\right)-L_t\left(\eta^2\right)\right| \leq \kappa \mathbb{E}\left[\left|\eta^1-\eta^2\right|\right], \quad \forall \eta^1, \eta^2 \in L^{2, \beta}(A) \cap L^{2, \beta}(W) .
\end{equation}
\end{remark}

\section{Fixed generator case}
\label{sec B}
In this section, we first consider the simple case that the generators $g$ and $f$ do not depend on $(Y , Z, U)$ with mean reflection, i.e, BSDE with mean reflection:
\begin{equation}
\label{eq2}
\left\{\begin{array}{l}
Y_t =\xi+\int_t^T f_s d A_s+\int_t^T g_s d s-\int_t^T \int_E U_s(e) q(d s d e)-\int_t^T Z_s d W_s \\
\quad \quad+ \left(K_T-K_t\right) , \quad \forall t \in[0, T] \text { a.s. };\\
\mathbb{E}\left[\ell\left(t, Y_t\right)\right] \geq 0, \quad \forall t \in[0, T] .
\end{array}\right.
\end{equation}

The following simplified assumption is needed.

\textbf{Assumption $\left(\mathbf{I I}^{\prime}\right) $} $f$ and $g$ are $\mathbb{F}$-progressive processes such that
$$
\mathbb{E}\left[\int_0^T e^{\beta A_s}\left|f_s\right|^2 d A_s+\int_0^T e^{\beta A_s}\left|g_s\right|^2 d s\right]<\infty .
$$

The main result of this section reads as follows.
\begin{thm}
\label{thm1}
Assume that assumptions $(\mathbf{I})$, $\left(\mathbf{II}\right)$-(i) and $\left(\mathbf{II}^{\prime}\right)$ hold for some $\beta > 0$. Meanwhile, $\left(\mathbf{I I I}\right)$ and $\left(\mathbf{IV}\right)$ hold, then the BSDE (\ref{eq2}) with mean reflection admits a unique deterministic flat  solution $(Y, U, Z, K) \in L^{2, \beta}(A) \cap$ $L^{2, \beta}(W) \times L^{2, \beta}(p) \times L^{2, \beta}(W) \times \mathcal{A}_D$. 
\end{thm}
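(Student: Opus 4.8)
The plan is to reduce the mean reflected problem to the unconstrained BSDE \eqref{eq standard} by exploiting that the driver is fixed and that $K$ is deterministic. First I would invoke Proposition \ref{prop2}: since $f,g$ carry no dependence on $(Y,U,Z)$ we have $L_f=L_p=0$, so for every $\beta>0$ the unconstrained equation
\[
y_t=\xi+\int_t^T f_s\,dA_s+\int_t^T g_s\,ds-\int_t^T\!\!\int_E u_s(e)\,q(ds\,de)-\int_t^T z_s\,dW_s
\]
admits a unique solution $(y,u,z)\in L^{2,\beta}(A)\cap L^{2,\beta}(W)\times L^{2,\beta}(p)\times L^{2,\beta}(W)$ with $y$ càdlàg, under $(\mathbf I)$, $(\mathbf{II})$-(i) and $(\mathbf{II}')$. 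Because $K$ is deterministic, adding $K_T-K_t$ to both sides shows that $Y_t:=y_t+(K_T-K_t)$, together with $U:=u$ and $Z:=z$, solves the forward part of \eqref{eq2} for \emph{any} $K\in\mathcal A_D$; it remains only to choose $K$ so that the running constraint and flatness hold.

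The candidate is the deterministic running-supremum envelope. Through the operator $L_t$, the constraint $\mathbb E[\ell(t,Y_t)]\ge 0$ is equivalent to $K_T-K_t\ge L_t(y_t)$, so I would set
\[
K_t:=\sup_{0\le s\le T}L_s(y_s)-\sup_{t\le s\le T}L_s(y_s),\qquad 0\le t\le T,
\]
which is nondecreasing with $K_0=0$ and satisfies $K_T-K_t=\sup_{t\le s\le T}L_s(y_s)\ge L_t(y_t)$. To guarantee $K\in\mathcal A_D$ and $Y\in L^{2,\beta}(A)\cap L^{2,\beta}(W)$, I would first establish an a priori estimate placing $y$ in $S^2_*$ (using $y_t=\mathbb E[\xi+\int_t^T f_s\,dA_s+\int_t^T g_s\,ds\mid\mathcal F_t]$ and absorbing the $dA$-integrals via the factor $e^{-\beta A_s}$). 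This yields $\sup_t\mathbb E[|y_t|]<\infty$, which together with the Lipschitz bound \eqref{eqL} and the linear growth in $(\mathbf{III})$.4 gives $\sup_t L_t(y_t)<\infty$; hence $K$ is bounded and $Y=y_\cdot+(K_T-K_\cdot)$ inherits the required integrability from $y$.

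The crux is the continuity of $t\mapsto L_t(y_t)$, which makes $K$ continuous and the flatness condition meaningful. I would split $|L_t(y_t)-L_s(y_s)|\le|L_t(y_t)-L_t(y_s)|+|L_t(y_s)-L_s(y_s)|$, bounding the first term by $\kappa\,\mathbb E[|y_t-y_s|]$ via \eqref{eqL} and the second by the uniform continuity of $\ell$ in $t$ from $(\mathbf{III})$.1. The delicate point, specific to the discontinuous setting, is the $L^1$-continuity $\mathbb E[|y_t-y_s|]\to 0$: although $y$ has càdlàg paths jumping at the times of the marked point process, Assumption $(\mathbf I)$ (continuity of $A$) forces those jump times to be totally inaccessible, so for each fixed $t$ one has $\mathbb P(\Delta y_t\ne 0)=0$ and thus $y_{t^-}=y_t$ in $L^1$; combined with the $S^2_*$-bound (uniform integrability) this gives two-sided $L^1$-continuity, hence continuity of $t\mapsto\mathbb E[\ell(t,y_t)]$ and of $t\mapsto L_t(y_t)$. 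I expect this regularity step to be the main obstacle, as it is precisely where the Brownian arguments of \cite{briand2018bsdes} must be replaced by the no-fixed-time-jump property coming from the continuity of $A$.

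Finally I would check the Skorokhod condition and uniqueness. By construction $dK$ is carried by $\{t:\sup_{s\ge t}L_s(y_s)=L_t(y_t)\}$, on which $\mathbb E[\ell(t,Y_t)]=0$; continuity of $K$ and of $t\mapsto\mathbb E[\ell(t,Y_{t^-})]$ then yields $\int_0^T\mathbb E[\ell(t,Y_{t^-})]\,dK_t=0$. For uniqueness, given any deterministic flat solution $(Y,U,Z,K)$, the process $Y_t-(K_T-K_t)$ solves the unconstrained BSDE, so by Proposition \ref{prop2} it equals $y$ and $(U,Z)=(u,z)$; flatness then forces $K_T-K_\cdot$ to be the minimal nonincreasing function dominating $L_\cdot(y_\cdot)$, namely the running supremum above, pinning down $K$. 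Throughout one uses $\mathbb E[\ell(T,\xi)]\ge 0$, which is necessary for the constraint at $t=T$ since $Y_T=\xi$.
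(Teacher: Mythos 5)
Your proposal follows essentially the same route as the paper: solve the unconstrained BSDE via Proposition \ref{prop2}, set $K_t=\sup_{0\le s\le T}L_s(y_s)-\sup_{t\le s\le T}L_s(y_s)$, verify the constraint and the Skorokhod condition on the support of $dK$, and derive uniqueness from the representation $Y=y+(K_T-K_\cdot)$. The only notable deviations are minor: you prove full two-sided continuity of $t\mapsto L_t(y_t)$ via the totally-inaccessible-jump argument where the paper settles for right-continuity, and you assert rather than verify that flatness forces $K_T-K_\cdot$ to be the running supremum, a step the paper carries out explicitly through the $t_1,t_2$ two-scenario contradiction argument.
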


The following a priori estimate on $Y$, similar to that in Foresta \cite{foresta2021optimal}, is essential in the proof of Theorem \ref{thm1}.

\begin{lemma}[A priori estimate on $Y$]
\label{A priori estimate}
Assume $\left(\mathbf{II}\right)$-(i) and $\left(\mathbf{II}^{\prime}\right)$ hold, then,
$$
\mathbb{E}\left[\sup _{t \in[0, T]} e^{\beta A_t} Y_t^2\right]<\infty .
$$
\end{lemma}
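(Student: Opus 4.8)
The plan is to run the classical energy estimate, adapted to the weighted discontinuous setting, and then upgrade it to a supremum bound via the Burkholder--Davis--Gundy (BDG) inequality. Observe first that, as a solution of (\ref{eq2}), the quadruple $(Y,U,Z,K)$ already lies in $L^{2,\beta}(A)\cap L^{2,\beta}(W)\times L^{2,\beta}(p)\times L^{2,\beta}(W)\times\mathcal A_D$, with $K$ deterministic and of finite variation. The substance of the lemma is therefore the uniform-in-time strengthening, not mere integrability; note also that $Y$ càdlàg on $[0,T]$ is pathwise bounded, so $\sup_t e^{\beta A_t}Y_t^2<\infty$ a.s. and only its integrability is at stake.

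First I would apply It\^{o}'s formula to $e^{\beta A_t}Y_t^2$. By Assumption $(\mathbf I)$ the weight $e^{\beta A}$ is a \emph{continuous} finite-variation process, so it produces no jump contribution, while the only jumps of $Y$ come from $\int_E U_s(e)\,q(ds\,de)$. Setting $M_t=\int_0^t Z_s\,dW_s+\int_0^t\int_E U_s(e)\,q(ds\,de)$, so that $d[M]_s=|Z_s|^2\,ds+\int_E U_s(e)^2\,p(ds\,de)$, integration from $t$ to $T$ together with $Y_T=\xi$ gives
\[
\begin{aligned}
&e^{\beta A_t}Y_t^2+\beta\int_t^T e^{\beta A_s}Y_s^2\,dA_s+\int_t^T e^{\beta A_s}\,d[M]_s\\
&\quad=e^{\beta A_T}\xi^2+2\int_t^T e^{\beta A_s}Y_{s^-}f_s\,dA_s+2\int_t^T e^{\beta A_s}Y_{s^-}g_s\,ds+2\int_t^T e^{\beta A_s}Y_{s^-}\,dK_s-2\int_t^T e^{\beta A_s}Y_{s^-}\,dM_s.
\end{aligned}
\]
Since the terms on the left other than $e^{\beta A_t}Y_t^2$ are non-negative, bounding the driver and reflection integrands by their absolute values and taking $\sup_{t\in[0,T]}$ yields a pathwise upper bound for $\sup_t e^{\beta A_t}Y_t^2$.

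Taking expectations, the terminal term is finite by $(\mathbf{II})$-(i). For the $f$- and $g$-cross terms I would apply Young's inequality $2ab\le\varepsilon a^2+\varepsilon^{-1}b^2$ keeping the \emph{matched} weight $e^{\beta A_s}$ on both factors, so that finiteness follows directly from $Y\in L^{2,\beta}(A)\cap L^{2,\beta}(W)$ and Assumption $(\mathbf{II}')$. The reflection term $\int_0^T e^{\beta A_s}|Y_{s^-}|\,dK_s$ is treated by a Young split together with Fubini in the deterministic measure $dK_s$, using that $K$ is deterministic, non-decreasing and finite (and that the weight is integrable in the present setting). The genuinely delicate piece is the martingale supremum: BDG gives
\[
\mathbb E\Big[\sup_{t}\Big|\int_t^T e^{\beta A_s}Y_{s^-}\,dM_s\Big|\Big]\le C\,\mathbb E\Big[\Big(\int_0^T e^{2\beta A_s}Y_{s^-}^2\,d[M]_s\Big)^{1/2}\Big],
\]
and bounding $\int_0^T e^{2\beta A_s}Y_{s^-}^2\,d[M]_s\le\big(\sup_s e^{\beta A_s}Y_s^2\big)\int_0^T e^{\beta A_s}\,d[M]_s$ followed by Young lets me peel off a term $\varepsilon\,\mathbb E[\sup_t e^{\beta A_t}Y_t^2]$, the remainder being controlled by $\|Z\|_{L^{2,\beta}(W)}^2+\|U\|_{L^{2,\beta}(p)}^2$ (the last via the compensation identity $\mathbb E[\int_0^T\!\int_E e^{\beta A_s}U_s(e)^2 p(ds\,de)]=\|U\|_{L^{2,\beta}(p)}^2$).

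The main obstacle is exactly this absorption: subtracting $\varepsilon\,\mathbb E[\sup_t e^{\beta A_t}Y_t^2]$ from both sides is legitimate only once that quantity is known to be finite, so the argument is a priori circular. I would break the circularity by localizing with the stopping times $\tau_n=\inf\{t\ge0:\int_0^t e^{2\beta A_s}Y_{s^-}^2\,d[M]_s\ge n\}\wedge T$: up to $\tau_n$ the stochastic integral is a genuine square-integrable martingale and $\mathbb E[\sup_{t\le\tau_n}e^{\beta A_t}Y_t^2]$ is finite, so the absorption is valid and produces a bound uniform in $n$; since $\int_0^T e^{2\beta A_s}Y_{s^-}^2\,d[M]_s<\infty$ a.s. one has $\tau_n\uparrow T$, and letting $n\to\infty$ with monotone convergence/Fatou yields the claim. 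Relative to the purely Brownian case, the new technical points are keeping the exponential weights matched through every inequality (so that no uncontrolled $e^{2\beta A}$ survives) and accommodating both the jump bracket $\int_E U^2\,p$ and the reflection measure $dK$; these are where the discontinuous, weighted structure makes the estimate more involved.
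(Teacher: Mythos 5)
The paper itself gives no proof of this lemma (it only points to the analogous estimate in Foresta \cite{foresta2021optimal}), so there is no line-by-line comparison to make; your route --- It\^o's formula applied to $e^{\beta A_t}Y_t^2$, Young's inequality with matched weights on the driver terms, BDG plus the bound $\int_0^T e^{2\beta A_s}Y_{s^-}^2\,d[M]_s\le(\sup_s e^{\beta A_s}Y_s^2)\int_0^T e^{\beta A_s}\,d[M]_s$, absorption of $\varepsilon\,\mathbb E[\sup_t e^{\beta A_t}Y_t^2]$, and localization to break the circularity --- is exactly the standard argument the authors are implicitly invoking, and all of those steps are sound. Two small points: since $K\in\mathscr A_D$ is only c\`adl\`ag, $Y$ also jumps at the (deterministic) jump times of $K$, so $[Y]$ carries an extra $\sum_s(\Delta K_s)^2$ beyond $[M]$; this term has the favourable sign and can be discarded, but you should say so rather than assert that the only jumps of $Y$ come from the $q$-integral.

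The one genuine gap is the reflection term. You dispose of $2\int_0^T e^{\beta A_s}|Y_{s^-}|\,dK_s$ by ``a Young split together with Fubini \ldots (and that the weight is integrable in the present setting)'', but that parenthesis is doing all the work and is not available under the stated hypotheses. A Young split produces either $\int_0^T e^{\beta A_s}Y_{s^-}^2\,dK_s$, which is not controlled by $Y\in L^{2,\beta}(A)\cap L^{2,\beta}(W)$ because $dK$ may be singular with respect to $dA_s+ds$ (and $\int_0^T e^{\beta A_s}Y_{s^-}^2\,dK_s\le K_T\sup_s e^{\beta A_s}Y_s^2$ cannot be absorbed since $K_T$ need not be small), or a companion term of the form $\varepsilon^{-1}\bigl(\int_0^T e^{\beta A_s/2}\,dK_s\bigr)^2\le\varepsilon^{-1}K_T^2\,e^{\beta A_T}$, whose expectation is finite only under $\mathbb E[e^{\beta A_T}]<\infty$, i.e.\ Assumption $(\mathbf V)$ --- which is not among the lemma's hypotheses $(\mathbf{II})$-(i) and $(\mathbf{II}')$. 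So as written your proof establishes the claim for the \emph{unreflected} equation ($K\equiv0$), and for the reflected one only after adding $(\mathbf V)$ (equivalently, after noting $Y_t=y_t+(K_T-K_t)$ with $y$ the unreflected solution, so that $\sup_t e^{\beta A_t}Y_t^2\le 2\sup_t e^{\beta A_t}y_t^2+2K_T^2e^{\beta A_T}$). This appears to be a gap in the lemma's statement as much as in your argument, but you should make the extra hypothesis explicit rather than smuggle it in.
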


With the help of this a priori estimate, we are going to prove Theorem \ref{thm1}.
\begin{proof}
\textbf{Step 1: Existence.}
Consider the following BSDE:
\begin{equation}
\label{eq small fixed}
y_t=\xi+\int_t^T f_s d A_s+\int_t^T g_s d s-\int_t^T \int_E u_s(e) q(d s d e)-\int_t^T z_s d W_s.
\end{equation}
In the spirit of the proof of \cite[Proposition 3.3]{foresta2021optimal}, we know BSDE (\ref{eq small fixed}) has a unique solution $(y, u, z) \in L^{2, \beta}(A) \cap$ $L^{2, \beta}(W) \times L^{2, \beta}(p) \times L^{2, \beta}(W)$.

Thus, inspired by \cite{briand2018bsdes,Briand_2020}, we can define:
$$
k_t=\sup_{0\le s\le T}L_s(y_s)-\sup _{t\le s\le T}L_s(y_s).
$$

We first show that $s\to L_s(y_s)$ is right continuous. Obviously $y_s$ is a right continuous process. Suppose there are two constants $x,y$ satisfying $x < L_t(y_t) < y$, and there exists $\epsilon>0$ such that $0 < s-t < \epsilon$, then, due to assumption $(\mathbf{III})$,
$$
\begin{aligned}
    \lim _{s \downarrow t} \mathbb{E} [l(s,x+ y_s)] &= \mathbb{E} [l(t,x+ y_t)] < \mathbb{E} [l(t,L_t(y_t)+ y_t)] =0 < \mathbb{E} [l(t,y+ y_t)] = \lim _{s \downarrow t} \mathbb{E} [l(s,y+ y_s)].
\end{aligned}
$$
Hence, for small enough $\epsilon$, $\mathbb{E} [l(s,x+ X_s)]< 0 < \mathbb{E} [l(t,y+ X_s)]$ implies that $x< L_s(X_s) <y$. Thus $L_s(y_s)$ is right continuous with respect to $s$. So we know that $k_t$ is a non-decreasing deterministic right continuous process with $k_0 = 0$. In the same manner, we can deduce that $k_t$ is c$\grave{a}$dl$\grave{a}$g.

Obviously, 
 $$
 \mathbb{E}\left[ \ell\left(t, y_t+k_T-k_t\right)\right]=
 \mathbb{E}\left[ \ell\left(t, y_t+\sup _{t \leqslant s \leqslant T} L_s(y_s ) \right)\right] \geq 0.
 $$
 Thus, let $Y=(y+k_T-k_t)$, $U = u$, $Z = z$, $K = k$,  and $(Y, U, Z, K)\in L^{2, \beta}(A) \cap$ $L^{2, \beta}(W) \times L^{2, \beta}(p) \times L^{2, \beta}(W) \times \mathcal{A}_D$ is a deterministic solution to the BSDE with mean reflection (\ref{eq2}). 

\textbf{Step 2: Flat and Uniqueness.} The idea borrows from the proof of  \cite[Proposition 7]{briand2018bsdes}.
  
We first verify that the solution is flat.

Observe that with the help of the right continuity of $s\to L_s(y_s)$ and the definition of $K$, 
$\mathbb{E}\left[\ell\left(t, y_{t^{-}} + \sup _{t\leqslant s \leqslant T}L_{s^-}(y_{s^-} ) \right)\right] = \mathbb{E}\left[\ell\left(t, y_{t^{-}} + L_{t^{-}}(y_{t^{-}} ) \right)\right]$, $dK-a.e$ and $L_{t^{-}}(y_{t^{-}})>0$, $dK-a.s.$.
So we have, 
$$
\begin{aligned}
    &\int_0^T \mathbb{E}\left[\ell\left(t, Y_{t^{-}}\right)\right] d K_t\\
    &= \int_0^T \mathbb{E}\left[\ell\left(t, y_{t^{-}} + \sup _{t\leqslant s \leqslant T}L_{s^-}(y_{s^-} ) \right)\right] d K_t\\
    &= \int_0^T \mathbb{E}\left[\ell\left(t, y_{t^{-}} + L_{t^{-}}(y_{t^{-}}) \right)\right] d K_t\\
    &= \int_0^T \mathbb{E}\left[\ell\left(t, y_{t^{-}} + L_{t^{-}}(y_{t^{-}}) \right)\right] \mathbf{1}_{\{L_{t^{-}}(y_{t^{-}})>0\}} d K_t\\
    &= \int_0^T \mathbb{E}\left[\ell\left(t^-, y_{t^{-}} + L_{t^{-}}(y_{t^{-}}) \right)\right] \mathbf{1}_{\{L_{t^{-}}(y_{t^{-}})>0\}} d K_t\\
    &=0.
\end{aligned}
$$
The last equality follows from the continuity of $\ell(\cdot,x)$. Thus $(Y, U, K)$ is a flat solution.

We are at the position to prove the uniqueness of the deterministic flat solution of mean reflected BSDE (\ref{eq2}). We prove by contradiction.

Suppose $\left(Y^1, U^1, K^1\right)$ and $\left(Y^2, U^2, K^2\right)$ are two different deterministic flat solutions to (\ref{eq2}). Thus both  $(Y^1_t-K_T^1+K_t^1,U_t^1)$ and  $(Y^2_t-K_T^2+K_t^2,U_t^2)$ are solutions to the standard BSDE (\ref{eq small fixed}). It follows from the uniqueness of the standard BSDE (\ref{eq small fixed}) that  $Y_t^1-K_T^1+K_t^1= Y_t^2-K_T^2+K_t^2$ and $U_t^1=U_t^2$ for each $t\in[0,T]$,  where the equations stand for being in the same equivalent class. Thus, there exists $t_1<T$ such that either,
$$
K_T^1-K_{t_1}^1>K_T^2-K_{t_1}^2, 
$$
or,
$$
K_T^2-K_{t_1}^2>K_T^1-K_{t_1}^1.
$$
Without loss of generality, we suppose the former case. Define $t_2$ as the first time after $t_1$ such that
$$
K_T^1-K_{t_2^-}^1=K_T^2-K_{t_2^-}^2.
$$
Note that for each $t\in(t_1,t_2]$, 
$
K_T^1-K_{t^-}^1\ge K_T^2-K_{t^-}^2. 
$
Two different scenarios may happen.
\begin{itemize}
\item Scenario 1: $t_2\le T$: 
\end{itemize}
In this case, $Y_{t^-}^1>Y_{t^-}^2$ for each $t\in(t_1,t_2)$. In view of the fact that $\ell(t,x)$ is strictly increasing in $x$,
$$
\mathbb{E}\left[\ell\left(t, Y_{t^{-}}^1\right)\right] > \mathbb{E}\left[\ell\left(t, Y_{t^{-}}^2\right)\right] \geq 0, \quad t_1 < t < t_2 .
$$
 However, $\left(Y^1, U^1, K^1\right)$ is a flat solution and via Skorohod condition,
$
dK_t^1=0,
$
for each $t\in(t_1,t_2)$. 
Thus, $K_{t_2^-}^1 = K_{t_1}^1$. We deduce that
$$
K_T^1-K_{t_2^-}^1=K_T^1-K_{t_1}^1>K_T^2-K_{t_1}^2 \geq K_T^2-K_{t_2^-}^2,
$$
which contradicts the definition of $t_2$.

\begin{itemize}
\item Scenario 2: $t_2=\infty$: 
\end{itemize}
It turns out that in this case, $Y_{t^-}^1>Y_{t^-}^2$ for each $t\in(t_1,T]$. Similarly, by means of the fact that $\ell(t,x)$ is strictly increasing in $x$,
$$
\mathbb{E}\left[\ell\left(t, Y_{t^{-}}^1\right)\right] > \mathbb{E}\left[\ell\left(t, Y_{t^{-}}^2\right)\right] \geq 0, \quad t_1 < t \le T.
$$
Then, via Skorohod condition again,
$
dK_t^1=0,
$
for each $t\in(t_1,T]$. 
Thus, $K_{T}^1 = K_{t_1}^1$. We deduce that
$$
0=K_T^1-K_{T}^1=K_T^1-K_{t_1}^1>K_T^2-K_{t_1}^2\ge 0,
$$
which also leads to a contradiction.

The two scenarios above together imply the uniqueness of the deterministic flat solution of mean reflected BSDE ((\ref{eq2}).

\end{proof}

\section{General generator case}
\label{sec ger}
In this section, the general generator case is taken into consideration: 
\begin{equation}
\label{eq3}
\left\{\begin{array}{l}
Y_t =\xi+\int_t^T f\left(s, Y_s, U_s\right) d A_s+\int_t^T g\left(s, Y_s, Z_s\right) d s \\
\quad \quad-\int_t^T \int_E U_s(e) q(d s d e)-\int_t^T Z_s d W_s+ \left(K_T-K_t\right) , \quad \forall t \in[0, T] \text { a.s. };\\
\mathbb{E}\left[\ell\left(t, Y_t\right)\right] \geq 0, \quad \forall t \in[0, T] .
\end{array}\right.
\end{equation}
In order to give the well-posedness of BSDE (\ref{eq3}),we add one assumption as follow:\\
\textbf{Assumption $\left(\mathbf{V}\right)$}: 
$$
\mathbb{E}\left[ e^{\beta A_T} \right] < \infty.
$$

\begin{remark}
\label{remark bdd}
If assumption $\left(\mathbf{V}\right)$ holds, we can deduce that:
$$
\mathbb{E}\left[ \int_{0}^{T} e^{\beta (A_s+ s)} (d A_s+ d s)\right] < \infty.
$$
\end{remark}

\begin{remark}
Assumption $\left(\mathbf{V}\right)$ is  easily satisfied by many typical marked point processes. For processes in Example \ref{ex1} with absolutely continuous dual predictive projection, Assumption $\left(\mathbf{V}\right)$ is equivalent to $\mathbb{E}\left[ e^{\beta\int_0^T\lambda_tdt} \right] < \infty$, which is fulfilled by properly chosen intensity $\lambda_t$. The process in Example {\ref{ex2}}, $1_{\{t\ge R\}}$, satisfies Assumption $\left(\mathbf{V}\right)$ automatically, since $\mathbb{E}\left[ e^{\beta L_T} \right] < \infty$ holds for any $\beta >0 $, deduced from Tanaka's formula (\ref{Tanaka}). 
\end{remark}

The main result of this section reads as follows.
\begin{thm}
\label{thm2}
    Let assumptions $(\mathbf{I})$, $\left(\mathbf{II}\right)$ and $\left(\mathbf{V}\right)$ hold for some $\beta$ satisfying  $\beta > 256 \kappa^4[3(L_f + L_g) + 2(L_p^2 + L_w^2)]$, assumptions $\left(\mathbf{I I I}\right)$ and $\left(\mathbf{IV}\right)$ hold, then the BSDE (\ref{eq3}) with mean reflection admits a unique deterministic flat  solution $(Y, U, Z, K) \in L^{2, \beta}(A) \cap$ $L^{2, \beta}(W) \times L^{2, \beta}(p) \times L^{2, \beta}(W) \times \mathcal{A}_D$. 
\end{thm}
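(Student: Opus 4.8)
The plan is to solve (\ref{eq3}) by a Banach fixed-point argument built on top of the fixed-generator result, Theorem \ref{thm1}. I introduce the Banach space $\mathcal{B}^\beta := (L^{2,\beta}(A)\cap L^{2,\beta}(W))\times L^{2,\beta}(p)\times L^{2,\beta}(W)$ and define a map $\Phi$ on it as follows. Given an input $(\bar Y,\bar U,\bar Z)\in\mathcal{B}^\beta$, I freeze the generators by setting $f_s:=f(s,\bar Y_s,\bar U_s)$ and $g_s:=g(s,\bar Y_s,\bar Z_s)$. Assumptions $(\mathbf{II})$-(ii)(b),(c) and $(\mathbf{II})$-(iii)(b),(c), together with $(\bar Y,\bar U,\bar Z)\in\mathcal{B}^\beta$, guarantee that these frozen coefficients satisfy $(\mathbf{II}')$; Theorem \ref{thm1} then produces a unique deterministic flat solution $(Y,U,Z,K)$ of the frozen equation (\ref{eq2}), and I set $\Phi(\bar Y,\bar U,\bar Z):=(Y,U,Z)$. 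To see that $\Phi$ maps $\mathcal{B}^\beta$ into itself, I recall from the construction in Theorem \ref{thm1} that $Y_t=y_t+\bigl(\sup_{t\le s\le T}L_s(y_s)-L_T(y_T)\bigr)$, where $(y,u,z)$ solves the standard BSDE (\ref{eq small fixed}) with the frozen coefficients; the a priori estimate Lemma \ref{A priori estimate}, the Lipschitz bound (\ref{eqL}) on $L_t$, and Assumption $(\mathbf{V})$ (through Remark \ref{remark bdd}) control the deterministic reflection increment and place $Y$ in $L^{2,\beta}(A)\cap L^{2,\beta}(W)$.

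The core of the proof is to show that $\Phi$ is a strict contraction once $\beta$ exceeds the stated threshold. I take two inputs $(\bar Y^i,\bar U^i,\bar Z^i)$, $i=1,2$, with images $(Y^i,U^i,Z^i)$, and write $Y^i_t=y^i_t+(K^i_T-K^i_t)$ with $U^i=u^i$, $Z^i=z^i$, where $(y^i,u^i,z^i)$ solves the frozen standard BSDE. First I would control $(y^1-y^2,u^1-u^2,z^1-z^2)$ by the weighted-$L^2$ energy estimate used in the proof of Proposition \ref{prop2}, i.e.\ by applying It\^o's formula to $e^{\beta A_t}|y^1_t-y^2_t|^2$: the source term is the difference of frozen generators, which by $(\mathbf{II})$-(ii)(b) and $(\mathbf{II})$-(iii)(b) is pointwise bounded by $L_f|\bar Y^1-\bar Y^2|+L_p(\int_E|\bar U^1-\bar U^2|^2\phi_s(de))^{1/2}$ and by $L_g|\bar Y^1-\bar Y^2|+L_w|\bar Z^1-\bar Z^2|$ respectively. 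This yields a bound of the form $C\beta^{-1}$ times the squared $\mathcal{B}^\beta$-distance of the inputs, with $C$ collecting $L_f,L_g,L_p^2,L_w^2$.

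Next I would estimate the reflection contribution. Using (\ref{eqL}) and the representation of $K^i$, the deterministic increment obeys $|(K^1_T-K^1_t)-(K^2_T-K^2_t)|\le 2\kappa\sup_{t\le s\le T}\mathbb{E}[|y^1_s-y^2_s|]$, and by Cauchy--Schwarz together with Assumption $(\mathbf{V})$/Remark \ref{remark bdd} the right-hand side is dominated by a multiple of $\kappa$ times the weighted $L^2$-distance of $y^1$ and $y^2$. Feeding the $y$-estimate of the previous step into this bound produces a factor $\kappa^2$, and because the resulting control of $Y^1-Y^2$ must itself be reinserted into both the $L^{2,\beta}(A)$ and $L^{2,\beta}(W)$ norms (the input $\bar Y$ enters both generators), a second pair of Cauchy--Schwarz steps contributes the remaining $\kappa^2$ and the numerical constant. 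Collecting the estimates gives $\|\Phi(\bar Y^1,\bar U^1,\bar Z^1)-\Phi(\bar Y^2,\bar U^2,\bar Z^2)\|_{\mathcal{B}^\beta}^2\le \frac{256\,\kappa^4[3(L_f+L_g)+2(L_p^2+L_w^2)]}{\beta}\,\|(\bar Y^1,\bar U^1,\bar Z^1)-(\bar Y^2,\bar U^2,\bar Z^2)\|_{\mathcal{B}^\beta}^2$, so that $\beta>256\kappa^4[3(L_f+L_g)+2(L_p^2+L_w^2)]$ makes $\Phi$ a contraction.

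Finally, the Banach fixed-point theorem yields a unique $(Y,U,Z)\in\mathcal{B}^\beta$ with $\Phi(Y,U,Z)=(Y,U,Z)$; pairing it with the process $K$ delivered by Theorem \ref{thm1} gives a deterministic flat solution of (\ref{eq3}). For uniqueness I would observe that any deterministic flat solution $(Y,U,Z,K)$ of (\ref{eq3}) is, upon freezing its own generators, a deterministic flat solution of the corresponding equation (\ref{eq2}), so $(Y,U,Z)$ is a fixed point of $\Phi$ and $K$ is the unique associated process from Theorem \ref{thm1}; uniqueness of the fixed point then closes the argument. The main obstacle is the reflection estimate: since $Y$ enters both the generators and the nonlinear operator $L_t$, and since $A$ need not be absolutely continuous, one cannot close the loop by a Gronwall argument in time. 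Instead, the weight $e^{\beta A}$ together with Assumption $(\mathbf{V})$ is precisely what allows the sup-in-time, $L^1$-in-$\omega$ quantity $\mathbb{E}[|y^1_s-y^2_s|]$ to be absorbed into the weighted $L^2$ norms, and tracking the accumulating $\kappa$ factors is what dictates the $\kappa^4$ appearing in the threshold on $\beta$.
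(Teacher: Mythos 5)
Your overall strategy --- freeze the generators, invoke the fixed-generator result to define a solution map, and run a Banach fixed-point argument using the representation $Y_t=y_t+(K_T-K_t)$ together with the Lipschitz bound (\ref{eqL}) on $L_t$ --- is exactly the paper's strategy. But there is a genuine gap at the decisive step: you claim that the map is a strict contraction \emph{on all of $[0,T]$} with contraction constant $256\kappa^4[3(L_f+L_g)+2(L_p^2+L_w^2)]/\beta$, so that taking $\beta$ above the stated threshold suffices. This cannot be right. When you reinsert the deterministic reflection increment, which is bounded by $2\kappa\sup_{s}\mathbb{E}[|y^1_s-y^2_s|]$, into the weighted norms $\|\cdot\|_{L^{2,\beta}(A)}$ and $\|\cdot\|_{L^{2,\beta}(W)}$, the constant (in $\omega$ and $s$) quantity gets multiplied by $\mathbb{E}\bigl[\int_0^T e^{\beta(A_s+s)}(dA_s+ds)\bigr]$. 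Assumption $(\mathbf{V})$ makes this factor finite, but it is \emph{increasing} in $\beta$ (for a Poisson-type $A$ it behaves like $e^{\beta\lambda T}/\beta$), so it is not of order $1/\beta$ and cannot be beaten by enlarging $\beta$; the requirement ``contraction constant $<1$'' would amount to $\beta\gtrsim\kappa^2\,\mathbb{E}[e^{\beta(A_T+T)}]$, which fails for essentially every unbounded $A$. A second, related slip is your claim that $\sup_{t\le s\le T}\mathbb{E}[|y^1_s-y^2_s|]$ is dominated by the weighted $L^2$-distance of $y^1,y^2$: the norm $\|\cdot\|_{L^{2,\beta}(A)}$ integrates against $dA_s$, which may put no mass near a given time $s$, so the sup-in-time quantity needs its own estimate (the paper derives a separate $S^2_*$ bound by applying It\^o's formula on $[t,T]$ for each $t$ and adding it to the energy inequality).

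The paper closes this gap by localizing in time: it proves the contraction only on a small interval $[T-h,T]$, choosing $h$ so that $\max_j\mathbb{E}\bigl[\int_{(j-1)h}^{jh}e^{\beta(A_s+s)}(dA_s+ds)\bigr]$ is small (possible by continuity of $A$ and dominated convergence under Assumption $(\mathbf{V})$), with an auxiliary parameter $\alpha$ tuned against $\beta$ and against this integral --- this is where the specific constant $256\kappa^4[3(L_f+L_g)+2(L_p^2+L_w^2)]$ actually comes from, via condition (\ref{eq condition}) --- and then stitches the local solutions backward from $T$ to $0$, using $Y_{(i-1)h}$ as the terminal datum on each preceding subinterval. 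Your proof as written omits both the localization and the stitching, and the global contraction inequality you assert does not hold; the rest of the architecture (self-mapping, representation, uniqueness via the fixed point) is sound and matches the paper.
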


In order to prove Theorem \ref{thm2}, we introduce a representation result which plays a key role in establishing the existence and uniqueness result.
\begin{lemma}
\label{lemma rep}
    Assume assumptions $(\mathbf{I})$, $\left(\mathbf{II}\right)$ and $\left(\mathbf{I I I}\right)$ hold for some $\beta >0$. Suppose $(Y, U, Z, K) \in L^{2, \beta}(A) \cap$ $L^{2, \beta}(W) \times L^{2, \beta}(p) \times L^{2, \beta}(W) \times \mathcal{A}_D$ is a deterministic flat solution to the BSDE with mean reflection (\ref{eq3}). Then, for each $t \in[0, T]$
$$
\left(Y_t, U_t, Z_t, K_t \right)=\left(y_t+\sup _{t \leq s \leq T} L_s\left(y_s\right), u_t, z_t, \sup _{0 \leq s \leq T} L_s\left(y_s\right)-\sup _{t \leq s \leq T} L_s\left(y_s\right)\right),
$$
where $(y, u, z) \in L^{2, \beta}(A) \cap$ $L^{2, \beta}(W) \times L^{2, \beta}(p) \times L^{2, \beta}(W) $ is the solution to the following BSDE with the driver $f\left(s, Y_s, U_s\right), g\left(s, Y_s, Z_s\right)$ on the time horizon $[0, T]$ , and $Y \in L^{2, \beta}(A) \cap$ $L^{2, \beta}(W)$ is fixed by the solution of (\ref{eq3}):
\begin{equation}
{\label{eq samll}}
y_t=\xi+\int_t^T f\left(s, Y_s, U_s\right) d A_s+\int_t^T g\left(s, Y_s, Z_s\right) d s-\int_t^T \int_E u_s(e) q(d s d e)-\int_t^T z_s d W_s.
\end{equation}
\end{lemma}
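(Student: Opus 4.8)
The plan is to reduce the general-generator problem to the fixed-generator situation already handled by Theorem \ref{thm1}, by freezing the generators along the assumed solution. Since $(Y,U,Z,K)$ is a deterministic flat solution of (\ref{eq3}), the processes
$$
\bar f_s := f(s, Y_s, U_s), \qquad \bar g_s := g(s, Y_s, Z_s)
$$
are fully determined. First I would verify that $(\bar f, \bar g)$ satisfies Assumption $(\mathbf{II}')$. Using the Lipschitz bounds in $(\mathbf{II})$-(ii)(b) and $(\mathbf{II})$-(iii)(b), one has $|\bar f_s| \le |f(s,0,0)| + L_f|Y_s| + L_p(\int_E |U_s(e)|^2 \phi_s(de))^{1/2}$ and $|\bar g_s| \le |g(s,0,0)| + L_g|Y_s| + L_w|Z_s|$; combining these with the integrability of $f(\cdot,0,0)$, $g(\cdot,0,0)$ from $(\mathbf{II})$-(ii)(c),(iii)(c) and with $Y \in L^{2,\beta}(A)\cap L^{2,\beta}(W)$, $U \in L^{2,\beta}(p)$, $Z \in L^{2,\beta}(W)$, one bounds $\|\bar f\|_{L^{2,\beta}(A)}$ and $\|\bar g\|_{L^{2,\beta}(W)}$ by finite quantities. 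Hence the frozen data fall under the hypotheses used to build the fixed-generator theory.

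The key observation is that, with the generators frozen, the quadruple $(Y,U,Z,K)$ is itself a deterministic flat solution of the fixed-generator mean reflected BSDE (\ref{eq2}) driven by $(\bar f, \bar g)$: it satisfies the same dynamics, the same constraint $\mathbb{E}[\ell(t,Y_t)]\ge 0$, and the same Skorokhod condition $\int_0^T \mathbb{E}[\ell(t,Y_{t^-})]\,dK_t = 0$, none of which refers to the generators beyond their frozen values. Now the standard BSDE (\ref{eq samll}) is exactly (\ref{eq small fixed}) with data $(\bar f, \bar g)$, so by Proposition \ref{prop2} it admits a unique solution $(y,u,z)\in L^{2,\beta}(A)\cap L^{2,\beta}(W)\times L^{2,\beta}(p)\times L^{2,\beta}(W)$. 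The existence argument in the proof of Theorem \ref{thm1} then constructs a deterministic flat solution of (\ref{eq2}) explicitly, namely
$$
\left(y_t + \sup_{t \le s \le T} L_s(y_s),\; u_t,\; z_t,\; \sup_{0 \le s \le T} L_s(y_s) - \sup_{t \le s \le T} L_s(y_s)\right),
$$
where the right-continuity of $s \mapsto L_s(y_s)$ guarantees that the last component lies in $\mathcal{A}_D$.

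Finally I would invoke uniqueness. Both $(Y,U,Z,K)$ and the quadruple displayed above are deterministic flat solutions of the same fixed-generator problem (\ref{eq2}), so the uniqueness argument in Step 2 of the proof of Theorem \ref{thm1}, which relies only on the strict monotonicity in $(\mathbf{III})$, forces them to coincide in the appropriate equivalence classes; reading off the four components yields the asserted representation. I expect the main obstacle to be conceptual rather than computational: one must recognize that freezing the generators turns the desired representation into a direct corollary of the fixed-generator construction, and then confirm carefully that the flatness condition for the original equation \emph{is} literally the flatness condition for the frozen one, so that the uniqueness of Theorem \ref{thm1} applies verbatim. The remaining care is the bookkeeping of Assumption $(\mathbf{II}')$ for $(\bar f,\bar g)$ and checking that $(y,u,z)$ lives in the correct weighted spaces, which ensures $\sup_{t\le s\le T}L_s(y_s)$ is finite and inherits the right-continuity needed for $K$ to be càdlàg.
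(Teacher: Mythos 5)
Your proposal is correct and follows essentially the same route as the paper: freeze the generators along the given solution, check Assumption $(\mathbf{II}')$ for the frozen data via the Lipschitz bounds and the integrability of $(Y,U,Z)$, observe that $(Y,U,Z,K)$ solves the resulting fixed-generator mean reflected equation, and conclude by the existence construction and uniqueness of Theorem \ref{thm1}. No substantive differences.
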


\begin{proof}
     First show (\ref{eq samll}) has a unique solution. It is equal to show that $f\left(s, Y_s, U_s\right), g\left(s, Y_s, Z_s\right)$ satisfies assumption $\left(\mathbf{II}^{\prime}\right)$.
     
 By assumption $\left(\mathbf{II}\right)$ and the fact $(Y, U, Z) \in L^{2, \beta}(A) \cap$ $L^{2, \beta}(W) \times L^{2, \beta}(p) \times L^{2, \beta}(W)$:
$$
\begin{aligned}
   &\mathbb{E}\left[\int_0^{T} e^{\beta A_s} \left| f\left(s, Y_s, U_s\right) \right|^2 d A_s\right]+\mathbb{E}\left[\int_0^{T} e^{\beta A_s} \left|g\left(s, Y_s, Z_s\right) \right|^2 d s\right] \\
   &\leq \mathbb{E}\left[\int_0^{T} e^{\beta A_s} \left[ |f(s, 0, 0)| +|L_f Y_s| + L_p\left(\int_E U_s(e)\phi_s(d e)\right)^{\frac{1}{2}} \right]^2 d A_s\right]\\
   &\quad +\mathbb{E}\left[\int_0^{T} e^{\beta A_s} \left[ |g(s, 0, 0)| +|L_g Y_s| + |L_w Z_s| \right]^2 d s\right] \\
   &\le2(1+L_f^2+L_p^2+L_g^2+L_w^2)\left(  \mathbb{E}\left[\int_0^{T} e^{\beta A_s}|f(s, 0, 0)|^2 d A_s\right] + \mathbb{E}\left[\int_0^{T} e^{\beta A_s} Y_s^2 d A_s\right]  + \mathbb{E}\left[\int_0^{T}\int_E e^{\beta A_s} U_s(e)\phi_s(d e) d A_s\right]\right. \\
   &\quad \left. +  \mathbb{E}\left[\int_0^{T} e^{\beta A_s}|g(s, 0, 0)|^2 d s\right] + \mathbb{E}\left[\int_0^{T} e^{\beta A_s} Y_s^2 d s\right]  + \mathbb{E}\left[\int_0^{T}\int_E e^{\beta A_s} Z_s^2 d s\right]\right) < \infty.
\end{aligned}
$$
Thus, (\ref{eq samll}) has a unique solution $(y, u, z) \in L^{2, \beta}(A) \cap$ $L^{2, \beta}(W) \times L^{2, \beta}(p) \times L^{2, \beta}(W)$. 

Define 
$$
k_t=\sup_{0\le s\le T}L_s(y_s)-\sup _{t\le s\le T}L_s(y_s).
$$
With the help of the proof of Theorem \ref{thm1}, $(y_t+k_T-k_t,u_t,z_t,k_t)\in  L^{2, \beta}(A) \cap$ $L^{2, \beta}(W) \times L^{2, \beta}(p) \times L^{2, \beta}(W) \times \mathcal A_D$ is the unique deterministic flat solution of the following BSDE with mean reflection with driver $f\left(s, Y_s, U_s\right), g\left(s, Y_s, Z_s\right)$:
\begin{equation}{\label{tilde}}
\left\{\begin{array}{l}
\tilde{Y}_t =\xi+\int_t^T f\left(s, Y_s, U_s\right) d A_s+\int_t^T g\left(s, Y_s, Z_s\right) d s\\
\quad \quad-\int_t^T \int_E \tilde{U}_s(e) q(d s d e)-\int_t^T \tilde{Z}_s d W_s + \left(\tilde{K}_T-\tilde{K}_t\right) , \quad \forall t \in[0, T] \text { a.s. };\\
\mathbb{E}\left[\ell\left(t, \tilde{Y}_t\right)\right] \geq 0, \quad \forall t \in[0, T] .
\end{array}\right.
\end{equation}

Notice that $(Y,U,Z,K)$ is also a deterministic flat solution to (\ref{tilde}). By uniqueness, $(Y_t,U_t,Z_t,K_t)=(y_t+k_T-k_t,u_t,z_t,k_t)$.

Therefore, 
$$
\left(Y_t,U_t,Z_t,K_t \right)=\left(y_t+\sup _{t \leq s \leq T} L_s\left(y_s\right), u_t, z_t, \sup _{0 \leq s \leq T} L_s\left(y_s\right)-\sup _{t \leq s \leq T} L_s\left(y_s\right)\right).
$$
\end{proof}

Replace $(Y,U,Z)$ by a general $(P,Q,R)\in L^{2, \beta}(A) \cap$ $L^{2, \beta}(W) \times L^{2, \beta}(p) \times L^{2, \beta}(W)$, we have the following conclusion. 
\begin{lemma}
\label{lemma PQR}
Assume assumptions $(\mathbf{I})$, $\left(\mathbf{II}\right)$ and $\left(\mathbf{I I I}\right)$ hold and $(P,Q,R) \in L^{2, \beta}(A) \cap$ $L^{2, \beta}(W) \times L^{2, \beta}(p) \times L^{2, \beta}(W)$. Then, the BSDE (\ref{eq PQR}) with mean reflection admits a unique flat solution $(Y, U, Z, K) \in L^{2, \beta}(A) \cap$ $L^{2, \beta}(W) \times L^{2, \beta}(p) \times L^{2, \beta}(W) \times \mathcal{A}_D$.

\begin{equation}
\label{eq PQR}
\left\{\begin{array}{l}
Y_t =\xi+\int_t^T f\left(s, P_s, Q_s\right) d A_s+\int_t^T g\left(s, P_s, R_s\right) d s\\
\quad \quad-\int_t^T \int_E U_s(e) q(d s d e)-\int_t^T Z_s d W_s + \left(K_T-K_t\right) , \quad \forall t \in[0, T] \text { a.s. };\\
\mathbb{E}\left[\ell\left(t, Y_t\right)\right] \geq 0, \quad \forall t \in[0, T] .
\end{array}\right.
\end{equation}
In addition, with the help of Lemma \ref{A priori estimate}, $Y\in S_*^2$.
\end{lemma}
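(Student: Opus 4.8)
The plan is to recognize (\ref{eq PQR}) as an instance of the fixed-generator mean reflected BSDE (\ref{eq2}) and thereby reduce everything to Theorem \ref{thm1}. Since $(P,Q,R)$ is a given, fixed triple, the coefficients
$$
\hat f_s := f(s,P_s,Q_s), \qquad \hat g_s := g(s,P_s,R_s)
$$
no longer depend on the unknown $(Y,U,Z)$; they are merely $\mathbb{F}$-progressive processes. Hence (\ref{eq PQR}) has exactly the structure of (\ref{eq2}) with driver $(\hat f,\hat g)$, and the existence and uniqueness of a deterministic flat solution will follow once we check that $(\hat f,\hat g)$ satisfies Assumption $\left(\mathbf{II}^{\prime}\right)$.

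The only real work is this integrability verification, which I would carry out exactly as at the beginning of the proof of Lemma \ref{lemma rep}, but with $(P,Q,R)$ in place of $(Y,U,Z)$. Using the Lipschitz estimates in Assumption $\left(\mathbf{II}\right)$(ii)(b) and (iii)(b), the growth controls on $f(s,0,0)$ and $g(s,0,0)$ from $\left(\mathbf{II}\right)$(ii)(c) and (iii)(c), together with elementary inequalities, I would bound
$$
\mathbb{E}\!\left[\int_0^T e^{\beta A_s}|\hat f_s|^2\,dA_s+\int_0^T e^{\beta A_s}|\hat g_s|^2\,ds\right]
$$
by a constant multiple of the $L^{2,\beta}(A)\cap L^{2,\beta}(W)$, $L^{2,\beta}(p)$ and $L^{2,\beta}(W)$ norms of $P$, $Q$, $R$ plus the finite quantities appearing in $\left(\mathbf{II}\right)$(ii)(c) and (iii)(c). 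Since $(P,Q,R)$ lies in the prescribed spaces, this expression is finite, so $(\hat f,\hat g)$ verifies $\left(\mathbf{II}^{\prime}\right)$.

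With $\left(\mathbf{II}^{\prime}\right)$ in hand, and assumptions $(\mathbf{I})$, $\left(\mathbf{II}\right)$(i), $\left(\mathbf{III}\right)$ and $\left(\mathbf{IV}\right)$ holding by hypothesis, Theorem \ref{thm1} applies verbatim to the frozen BSDE and yields a unique deterministic flat solution $(Y,U,Z,K) \in L^{2,\beta}(A)\cap L^{2,\beta}(W)\times L^{2,\beta}(p)\times L^{2,\beta}(W)\times\mathcal{A}_D$. Finally, to obtain $Y\in S_*^2$, I would apply the a priori estimate of Lemma \ref{A priori estimate} to this solution; since $A$ is nonnegative and nondecreasing with $A_0=0$, one has $e^{\beta A_t}\ge 1$, whence
$$
\sup_{0\le t\le T}\mathbb{E}\big[Y_t^2\big] \le \mathbb{E}\Big[\sup_{0\le t\le T} e^{\beta A_t} Y_t^2\Big] < \infty,
$$
which is precisely $\|Y\|_{S_*^2}^2<\infty$. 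There is no genuine obstacle here: the statement is essentially a corollary of Theorem \ref{thm1}, the single point requiring care being the reuse of the integrability computation of Lemma \ref{lemma rep} to verify $\left(\mathbf{II}^{\prime}\right)$ for the composed coefficients.
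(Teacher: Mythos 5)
Your proposal is correct and follows essentially the same route as the paper: both reduce (\ref{eq PQR}) to the fixed-generator case by checking Assumption $(\mathbf{II}^{\prime})$ for the frozen drivers $f(s,P_s,Q_s)$, $g(s,P_s,R_s)$ via the computation from Lemma \ref{lemma rep} with $(P,Q,R)$ in place of $(Y,U,Z)$, and then invoke the Theorem \ref{thm1} machinery (the paper re-runs its construction of $y$ and $k_t$ explicitly, you cite it as a black box) together with Lemma \ref{A priori estimate} for $Y\in S_*^2$. The only cosmetic discrepancy is that you list Assumption $(\mathbf{IV})$ as holding ``by hypothesis'' while the lemma's statement omits it; since Theorem \ref{thm1} formally requires $(\mathbf{IV})$, this is an imprecision inherited from the paper rather than a gap in your argument.
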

\begin{proof}
 Similar with Lemma \ref{lemma rep}, $(y, u, z) \in L^{2, \beta}(A) \cap$ $L^{2, \beta}(W) \times L^{2, \beta}(p) \times L^{2, \beta}(W) $ is the unique solution of 
 $$
y_t=\xi+\int_t^T f\left(s, P_s, Q_s\right) d A_s+\int_t^T g\left(s, P_s, R_s\right) d s-\int_t^T \int_E u_s(e) q(d s d e)-\int_t^T z_s d W_s. 
 $$
 Define $$ k_t = \sup _{0 \leqslant s \leqslant T} L_s(y_{s})-\sup _{t \leqslant s \leqslant T} L_s(y_s ).$$

 It follows that, $(Y_t, U_t, Z_t, K_t) = (y_t + (k_T - k_t), u_t, z_t, k_t)\in L^{2, \beta}(A) \cap$ $L^{2, \beta}(W) \times L^{2, \beta}(p) \times L^{2, \beta}(W)\times \mathcal A_D$ is a solution of:
 $$
 \tilde{Y}_t=\xi+\int_t^T f\left(s, P_s, Q_s\right) d A_s+\int_t^T g\left(s, P_s, R_s\right) d s-\int_t^T \int_E \tilde{U}_s(e) q(d s d e)-\int_t^T \tilde{Z}_s d W_s+ (\tilde{K}_T - \tilde{K}_t).
 $$
 Same as Lemma \ref{thm1}, we can check the mean reflection condition, the flatness and the uniqueness.
 So $(Y_t, U_t, Z_t, K_t) \in L^{2, \beta}(A) \cap$ $L^{2, \beta}(W) \times L^{2, \beta}(p) \times L^{2, \beta}(W)\times \mathcal A_D$ is the unique flat solution of (\ref{eq PQR}). The fact $Y\in S_*^2$ follows from Lemma \ref{A priori estimate}.
\end{proof}

Now we are going to prove Theorem \ref{thm2}. Inspired by \cite{hu2022general, Hibon2017quadratic}, we first prove the existence and uniqueness of the solution on a small time interval and then stitch the local solutions to build the global solution.

\begin{proof}[Proof of Theorem \ref{thm2}]

For notation simplicity, we denote $\mathbb{L^{\beta}} =L^{2, \beta}(A) \cap L^{2, \beta}(W) \times L^{2, \beta}(p) \times L^{2, \beta}(W)$. $\mathbb{L^{\beta}}(s,t)$ denotes the restriction of the space on $(s,t)$.
Define a solution map $\Gamma$ for $(P,Q,R) \in  \mathbb L^\beta$ by $\Gamma(P,Q,R) = (Y,U,Z)$ where $(Y,U,Z)$ is the first three components of the solution $(Y, U, Z, K)$ to (\ref{eq PQR}). Inheriting from Lemma \ref{lemma PQR},  $\Gamma\left( \mathbb L^\beta(T-h, T)\right) \subset  \mathbb L^\beta(T-h, T)$ for any $h \in(0, T]$.

Let $(P^i,Q^i,R^i) \in \mathbb L^\beta(T-h, T)$  for $i = 1,2$. It follows from Lemma \ref{lemma rep} that 
\begin{equation}
\label{eq 8}
\Gamma\left(U^i\right)_t:=y_t^i+\sup _{t \leq s \leq T} L_s\left(y_s^i\right), \quad \forall t \in[T-h, T],
\end{equation}
where $y^i$ is the solution to the BSDE (\ref{eq PQR}). Since $\mathbb L^{\beta}$ is complete, in view of Lemma \ref{lemma rep}, the existence and uniqueness of local solution in $[T-h,T]$ boils down to the strict contractivity of $\Gamma$  with respect to the norm $\|\cdot\|_{\mathbb L^\beta}$.

Denote $\bar{y} = y^1 - y^2$ and similarly denote $\bar{Y},\bar{U}, \bar{Z}$, $\bar{f}_s = f(s,P^1_s, Q^1_s) - f(s,P^2_s, Q^2_s)$ $\bar{g}_s = g(s,P^1_s, Q^1_s) - g(s,P^2_s, Q^2_s)$. Obviously, $(\bar{y}, \bar{U}, \bar{Z})$ satisfies
$$
\bar{y}=\int_t^T \bar{f}_s d A_s+\int_t^T \bar{g}_s d s-\int_t^T \int_E \bar{U}_s(e) q(d s d e)-\int_t^T \bar{Z}_s d W_s.
$$

For some $\beta>0$, applying  It$\hat o$'s formula to $e^{\beta (A_s+ s)} \bar{y}_s^2$ on $[T-h, T]$,
\begin{equation}
\label{eqito}
\begin{aligned}
e^{\beta (A_T +  T)} \bar{y}_T^2 =& e^{\beta (A_{T-h}+ (T-h))} \bar{y}_{T-h}^2 + \beta \int_{T-h}^{T} e^{\beta (A_s+ s)} \bar{y}_s^2 d A_s+\beta \int_0^{T} e^{\beta (A_s+ s)} \bar{y}_s^2 d s \\
& -2 \int_{T-h}^{T} e^{\beta (A_s+ s)} \bar{y}_s \bar{f}_s d A_s-2 \int_{T-h}^{T} e^{\beta (A_s+ s)} \bar{y}_s \bar{g}_s d s +2 \int_{T-h}^{T} e^{\beta (A_s+ s)} \bar{y}_s \bar{Z}_s d W_s\\
& +\int_{T-h}^{T} e^{\beta (A_s+ s)} \bar{Z}_s^2 d s +2 \int_{T-h}^{T} \int_E e^{\beta (A_s+ s)} \bar{y}_{s^{-}} \bar{U}_s(e) q(d s d e) +\int_{T-h}^{T} \int_E e^{\beta (A_s+ s)} \bar{U}_s^2(e) p(d s d e).
\end{aligned}
\end{equation}
Making use of the fact that
$$
\int_0^t \int_E U_s(e) p(d s d e)=\int_0^t \int_E U_s(e) \phi_s(d e) d A_s+\int_0^t \int_E U_s(e) q(d s d e),
$$ 
and taking expectation on both sides of (\ref{eqito}), we obtain:
$$
\begin{aligned}
& \beta \mathbb{E}  {\left[\int_{T-h}^{T} e^{\beta (A_s+ s)} \bar{y}_s^2 d A_s\right]+\beta \mathbb{E}\left[\int_{T-h}^{T} e^{\beta (A_s+ s)} \bar{y}_s^2 d s\right] } \\
& +\mathbb{E}\left[\int_{T-h}^{T} e^{\beta (A_s+ s)}\bar{Z}_s^2 d s\right]  +\mathbb{E}\left[\int_{T-h}^{T} \int_E e^{\beta(A_s+ s)} \bar{U}_s^2 \phi_s(d e) d A_s\right] \\
& \leq 2 \mathbb{E}\left[\int_{T-h}^{T} e^{\beta (A_s+ s)} \bar{y}_s\bar{f}_s d A_s\right]+2 \mathbb{E}\left[\int_{T-h}^{T} e^{\beta (A_s+ s)} \bar{y}_s\bar{g}_s d s\right].
\end{aligned}
$$

Then, for any $ t \in [0,T]$, applying  It$\hat o$'s formula to $e^{\beta (A_s+ s)} \bar{y}_s^2$ on $[t, T]$ and taking expectation we can observe that:
\begin{equation}
\label{eq ito}
\begin{aligned}
\mathbb E\left[e^{\beta (A_t+ t)} \bar{y}_{t}^2\right] \leq \mathbb E\left[ 2 \int_{t}^{T} e^{\beta (A_s+ s)} \bar{y}_s\bar{f}_s d A_s + 2 \int_{t}^{T} e^{\beta (A_s+ s)} \bar{y}_s\bar{g}_s d s\right].
\end{aligned}
\end{equation}

For $\bar Y$, denote the norm
$$
\|\bar Y\|_{*,\beta, A} :=
\left(\mathbb{E}\left[\int_0^T e^{\beta (A_s+ s)} \bar{Y}_s^2 d A_s\right]\right)^{1 / 2}.
$$
Correspondingly,
$$
\|\bar{y}\|_{*,\beta, A,[T-h, T]} := \left(\mathbb{E}\left[\int_{T-h}^T e^{\beta (A_s+ s)} \bar{y}_s^2 d A_s\right]\right)^{1 / 2}.
$$
Define the norms $\|\cdot\|_{*, \beta, p}$, $\|\cdot\|_{*, \beta, W}$, $\|\cdot\|_{*, \beta, p,[T-h, T]}$ and $\|\cdot\|_{*, \beta, W,[T-h, T]}$ in the similar manner.

Moreover, define:
$$
\|\bar{y}\|_{S^2_*,[T-h, T]} := \left(\sup _{T-h \leq s \leq T} \mathbb{E}\left[ \bar{y}_s^2 \right]\right)^{1 / 2}.
$$
Using the Lipschitz properties of $f$ and $g$, it turns out that,
$$
\begin{aligned}
& \beta\|\bar{y}\|_{*, \beta, A,[T-h, T]}^2+\beta\|\bar{y}\|_{*, \beta, W,[T-h, T]}^2+\|\bar{Z}\|_{*, \beta, W,[T-h, T]}^2 +\|\bar{U}\|_{*, \beta, p,[T-h, T]}^2\\
& \leq 2 L_f \mathbb{E}\left[\int_{T-h}^T e^{\beta (A_s+ s)}\left|\bar{y}_s \| \bar{P}_s\right| d A_s\right]+2 L_p \mathbb{E}\left[\int_{T-h}^T e^{\beta (A_s+ s)}\left|\bar{y}_s\right|\left(\int_E\left|\bar{Q}_s^2 \phi_s(d e)\right|\right)^{1 / 2} d A_s\right] \\
& \quad+2 L_g \mathbb{E}\left[\int_{T-h}^T e^{\beta (A_s+ s)}\left|\bar{y}_s \| \bar{P}_s\right| d s\right]+2 L_w \mathbb{E}\left[\int_{T-h}^T e^{\beta (A_s+ s)}\left|\bar{y}_s \| \bar{R}_s\right| d s\right] .
\end{aligned}
$$
With the help of the inequality $2 a b \leq \alpha a^2+b^2 / \alpha$, $a, b \geq 0$, we obtain:
$$
\begin{aligned}
& \beta\|\bar{y}\|_{*, \beta, A,[T-h, T]}^2+\beta\|\bar{y}\|_{*, \beta, W,[T-h, T]}^2+\|\bar{Z}\|_{*, \beta, W,[T-h, T]}^2 +\|\bar{U}\|_{*, \beta, p,[T-h, T]}^2\\
& \leq \frac{L_f}{\sqrt{\alpha}}\|\bar{y}\|_{*, \beta, A,[T-h, T]}^2+L_f \sqrt{\alpha}\|\bar{P}\|_{*, \beta, A,[T-h, T]}^2+\frac{L_p^2}{\alpha}\|\bar{y}\|_{*, \beta, A,[T-h, T]}^2+\alpha\|\bar{Q}\|_{*, \beta, p,[T-h, T]}^2 \\
& \quad+\frac{L_g}{\sqrt{\alpha}}\|\bar{y}\|_{*, \beta, W,[T-h, T]}^2+L_g \sqrt{\alpha}\|\bar{P}\|_{*, \beta, W,[T-h, T]}^2+\frac{L_w^2}{\alpha}\|\bar{y}\|_{*, \beta, W,[T-h, T]}^2+\alpha\|\bar{R}\|_{*, \beta, W,[T-h, T]}^2 
\end{aligned}
$$
for any $\alpha>0$.

Recall Lemma \ref{A priori estimate}, we have $\|\bar{y}\|_{S_*^2} < \infty$. Then, in view of (\ref{eq ito}) we can deduce that:
$$
\begin{aligned}
\|\bar{y}\|_{S^2_*,[T-h, T]}^2 &= \sup _{T-h \leq s \leq T} \mathbb{E}\left[  \bar{y}_s^2 \right] \\
&\leq \sup _{T-h \leq s \leq T}\mathbb{E}\left[  e^{\beta (A_s+ s)} \bar{y}_s^2 \right]\\
&\le 2 \mathbb{E}\left[\int_{T-h}^{T} e^{\beta (A_s+ s)} |\bar{y}_s\bar{f}_s| d A_s\right]+2 \mathbb{E}\left[\int_{T-h}^{T} e^{\beta (A_s+ s)} |\bar{y}_s\bar{g}_s| d s\right]\\
& \leq \frac{L_f}{\sqrt{\alpha}}\|\bar{y}\|_{*, \beta, A,[T-h, T]}^2+L_f \sqrt{\alpha}\|\bar{P}\|_{*, \beta, A,[T-h, T]}^2+\frac{L_p^2}{\alpha}\|\bar{y}\|_{*, \beta, A,[T-h, T]}^2+\alpha\|\bar{Q}\|_{*, \beta, p,[T-h, T]}^2 \\
& \quad+\frac{L_g}{\sqrt{\alpha}}\|\bar{y}\|_{*, \beta, W,[T-h, T]}^2+L_g \sqrt{\alpha}\|\bar{P}\|_{*, \beta, W,[T-h, T]}^2+\frac{L_w^2}{\alpha}\|\bar{y}\|_{*, \beta, W,[T-h, T]}^2+\alpha\|\bar{R}\|_{*, \beta, W,[T-h, T]}^2 .
\end{aligned}
$$
Add the two inequalities we have:
$$
\begin{aligned}
& \|\bar{y}\|_{S^2_*,[T-h, T]}^2 + \beta\|\bar{y}\|_{*, \beta, A,[T-h, T]}^2+\beta\|\bar{y}\|_{*, \beta, W,[T-h, T]}^2+\|\bar{Z}\|_{*, \beta, W,[T-h, T]}^2+\|\bar{U}\|_{*, \beta, p,[T-h, T]}^2\\
& \leq 2 \frac{L_f}{\sqrt{\alpha}}\|\bar{y}\|_{*, \beta, A,[T-h, T]}^2+2 L_f \sqrt{\alpha}\|\bar{P}\|_{*, \beta, A,[T-h, T]}^2+2 \frac{L_p^2}{\alpha}\|\bar{y}\|_{*, \beta, A,[T-h, T]}^2+2 \alpha\|\bar{Q}\|_{*, \beta, p,[T-h, T]}^2 \\
& \quad+2 \frac{L_g}{\sqrt{\alpha}}\|\bar{y}\|_{*, \beta, W,[T-h, T]}^2+2 L_g \sqrt{\alpha}\|\bar{P}\|_{*, \beta, W,[T-h, T]}^2+2 \frac{L_w^2}{\alpha}\|\bar{y}\|_{*, \beta, W,[T-h, T]}^2+2 \alpha\|\bar{R}\|_{*, \beta, W,[T-h, T]}^2 .
\end{aligned}
$$
Rearranging the terms, we find
\begin{equation}
\label{eq relation}
\begin{aligned}
& \|\bar{y}\|_{S^2_*,[T-h, T]}^2 + \|\bar{U}\|_{*, \beta, p,[T-h, T]}^2+\|\bar{Z}\|_{*, \beta, W,[T-h, T]}^2\\
& \quad +\left(\beta-\frac{2 L_p^2}{\alpha}-\frac{2 L_f}{\sqrt{\alpha}}\right)\|\bar{y}\|_{*, \beta, A,[T-h, T]}^2  +\left(\beta-\frac{2 L_W^2}{\alpha}-\frac{2 L_g}{\sqrt{\alpha}}\right)\|\bar{y}\|_{*, \beta, W,[T-h, T]}^2 \\
& \quad \leq 2 L_f \sqrt{\alpha}\|\bar{P}\|_{*, \beta, A,[T-h, T]}^2+2 \alpha\|\bar{Q}\|_{*, \beta, p,[T-h, T]}^2+2 L_g \sqrt{\alpha}\|\bar{P}\|_{*, \beta, W,[T-h, T]}^2+2 \alpha\|\bar{R}\|_{*, \beta, W,[T-h, T]}^2 .
\end{aligned}
\end{equation}
Since $\beta > 256 \kappa^4[3(L_f + L_g) + 2(L_p^2 + L_w^2)]>2 L_p^2+3 L_f+2L_w^2+3L_g$, then as long as we are able to choose a constant $0<\alpha<1$, we have
\begin{equation}
\label{*}
    \beta>\frac{2 L_p^2}{\alpha}+\frac{3 L_f}{\sqrt{\alpha}}+\frac{2 L_w^2}{\alpha}+\frac{3 L_g}{\sqrt{\alpha}}.
\end{equation}
The exact form of $\alpha$ will be determined later. The relation (\ref{eq relation}) rewrites as
$$
\begin{aligned}
& \|\bar{y}\|_{S^2_*,[T-h, T]}^2 + \frac{L_f}{\sqrt{\alpha}}\|\bar{y}\|_{*, \beta, A,[T-h, T]}^2+\frac{L_g}{\sqrt{\alpha}}\|\bar{y}\|_{*, \beta, W,[T-h, T]}^2+\|\bar{U}\|_{*, \beta, p,[T-h, T]}^2+\|\bar{Z}\|_{*, \beta, W,[T-h, T]}^2 \\
& \quad \leq 2 L_f \sqrt{\alpha}\|\bar{P}\|_{*, \beta, A,[T-h, T]}^2+2 \alpha\|\bar{Q}\|_{*, \beta, p,[T-h, T]}^2+2 L_g \sqrt{\alpha}\|\bar{P}\|_{*, \beta, W,[T-h, T]}^2+2 \alpha\|\bar{R}\|_{*, \beta, W,[T-h, T]}^2 \\
& \quad=2 \alpha\left(\frac{L_f}{\sqrt{\alpha}}\|\bar{P}\|_{*, \beta, A,[T-h, T]}^2+\|\bar{Q}\|_{*, \beta, p,[T-h, T]}^2+\frac{L_g}{\sqrt{\alpha}}\|\bar{P}\|_{*, \beta, W,[T-h, T]}^2+\|\bar{R}\|_{*, \beta, W,[T-h, T]}^2\right) .
\end{aligned}
$$
By the definition of the norms, Remark \ref{remark1}, Remark \ref{remark bdd} and (\ref{eq 8}),  we can conclude that,
$$
\begin{aligned}
\|\bar{Y}_s\|^2_{*, \beta, A,[T-h, T]} &= \left\|\bar{y}_s + \sup _{T-h \leq s \leq T} L_s(y_s^1) - \sup _{T-h \leq s \leq T} L_s(y_s^2)\right\|^2_{*, \beta, A,[T-h, T]}\\
&\leq \mathbb{E}\left[\int_{T-h}^{T} e^{\beta (A_s+ s)} \left(\bar{y}_s+ \sup _{T-h \leq s \leq T} |L_s(y_s^1)- L_s(y_s^1)|\right)^2 d A_s\right]\\
&\leq \mathbb{E}\left[\int_{T-h}^{T} e^{\beta (A_s+ s)} \left(\bar{y}_s+ \kappa \sup _{T-h \leq s \leq T} \mathbb{E}[|\bar{y}_s|]\right)^2 d A_s\right]\\
&\leq  \mathbb{E}\left[\int_{T-h}^{T} e^{\beta (A_s+ s)} \left(2 \bar{y}_s^2 + 2 \kappa^2 \sup _{T-h \leq s \leq T} \mathbb{E}[\bar{y}_s^2]\right) d A_s\right]\\
&\leq  2\|\bar{y}\|^2_{*, \beta, A,[T-h, T]} + 2 \kappa^2 \mathbb{E}\left[\int_{T-h}^{T} e^{\beta (A_s+ s)} d A_s\right] \|\bar{y}\|_{S^2_*,[T-h, T]}^2.
\end{aligned}
$$
In the same way, we also have: 
$$
\begin{aligned}
& \|\bar{Y}_s\|^2_{*, \beta, W,[T-h, T]} \leq 2 \|\bar{y}\|^2_{*, \beta, W,[T-h, T]} + 2 \kappa^2 \mathbb{E}\left[\int_{T-h}^{T} e^{\beta (A_s+ s)} d s\right] \|\bar{y}\|_{S^2_*,[T-h, T]}^2,\\
& \|\bar{Y}_s\|_{S^2_*,[T-h, T]}^2 \leq (2+2\kappa^2)\|\bar{y}\|_{S^2_*,[T-h, T]}^2.
\end{aligned}
$$
Hence, we conclude that:
\begin{equation}
\begin{aligned}
    & \|\bar{Y}\|_{S^2_*,[T-h, T]}^2 + \frac{L_f}{\sqrt{\alpha}}\|\bar{Y}\|_{*, \beta, A,[T-h, T]}^2+\frac{L_g}{\sqrt{\alpha}}\|\bar{Y}\|_{*, \beta, W,[T-h, T]}^2+\|\bar{U}\|_{*, \beta, p,[T-h, T]}^2+\|\bar{Z}\|_{*, \beta, W,[T-h, T]}^2 \\ 
    &\leq  (2+2\kappa^2)\|\bar{y}\|_{S^2_*,[T-h, T]}^2+\frac{L_f}{\sqrt{\alpha}} \left[ 2\|\bar{y}\|^2_{*, \beta, A,[T-h, T]} + 2 \kappa^2 \mathbb{E}\left[\int_{T-h}^{T} e^{\beta (A_s+ s)} d A_s\right] \|\bar{y}^2\|_{S^2_*,[T-h, T]}^2 \right]\\
    &\quad + \frac{L_g}{\sqrt{\alpha}} \left[ 2 \|\bar{y}\|^2_{*, \beta, W,[T-h, T]} + 2 \kappa^2 \mathbb{E}\left[\int_{T-h}^{T} e^{\beta (A_s+ s)} d s\right] \|\bar{y}\|_{S^2_*,[T-h, T]}^2 \right] \\
    &\quad + \|\bar{U}\|_{*, \beta, p,[T-h, T]}^2+\|\bar{Z}\|_{*, \beta, W,[T-h, T]}^2 \\
    &\leq 4 \alpha \left[ 2 \kappa^2 \left( \frac{L_f}{\sqrt{\alpha}} + \frac{L_g}{\sqrt{\alpha}}\right)\mathbb{E}\left[ \int_{T-h}^{T} e^{\beta (A_s+ s)} (d A_s+ d s)\right] +2 \kappa ^2 \right]\\
    &\quad \times \left(\frac{L_f}{\sqrt{\alpha}}\|\bar{P}\|_{*, \beta, A,[T-h, T]}^2+\|\bar{Q}\|_{*, \beta, p,[T-h, T]}^2+\frac{L_g}{\sqrt{\alpha}}\|\bar{P}\|_{*, \beta, W,[T-h, T]}^2+\|\bar{R}\|_{*, \beta, W,[T-h, T]}^2\right).
\end{aligned}
\end{equation}
The last inequality holds since $\kappa>1$.

Since $\beta > 256 \kappa^4[3(L_f + L_g) + 2(L_p^2 + L_w^2)]$, with the help of assumption ($\mathbf{V}$) and  dominated convergence theorem, we are allowed to choose a small enough $h$ such that: $T=nh$, and
\begin{equation}
\label{eq condition}
    \beta > 256\kappa^4 [3(L_f + L_g) + 2(L_p^2 + L_w^2)]\left[\left( L_f+ L_g \right)\max_{1\le j\le n}\mathbb{E}\left[ \int_{(j-1)h}^{jh} e^{\beta (A_s+ s)} (d A_s+ d s)\right]+1 \right]^2.
\end{equation}
Let
$$
\alpha :=\frac{1}{256\kappa^4\left[ 1+\left( L_f+ L_g \right)\mathbb{E}\left[ \int_{T-h}^{T} e^{\beta (A_s+ s)} (d A_s+ d s)\right]\right]^2} <1.
$$
Then (\ref{*}) holds.
Meanwhile,
$$
4 \alpha \left[ 2 \kappa^2 \left( \frac{L_f}{\sqrt{\alpha}} + \frac{L_g}{\sqrt{\alpha}}\right)\mathbb{E}\left[ \int_{T-h}^{T} e^{\beta (A_s+ s)} (d A_s+ d s)\right] +2 \kappa ^2 \right] < \frac{1}{2} < 1.
$$
Thus we can deduce that 
\begin{equation}
\begin{aligned}
    & \|\bar{Y}\|_{S^2_*,[T-h, T]}^2 + \frac{L_f}{\sqrt{\alpha}}\|\bar{Y}\|_{*, \beta, A,[T-h, T]}^2+\|\bar{U}\|_{*, \beta, p,[T-h, T]}^2+\frac{L_g}{\sqrt{\alpha}}\|\bar{Y}\|_{*, \beta, W,[T-h, T]}^2+\|\bar{Z}\|_{*, \beta, W,[T-h, T]}^2 \\ 
    &\leq \frac{1}{2} \left(\frac{L_f}{\sqrt{\alpha}}\|\bar{P}\|_{*, \beta, A,[T-h, T]}^2+\|\bar{Q}\|_{*, \beta, p,[T-h, T]}^2+\frac{L_g}{\sqrt{\alpha}}\|\bar{P}\|_{*, \beta, W,[T-h, T]}^2+\|\bar{R}\|_{*, \beta, W,[T-h, T]}^2\right).
\end{aligned}
\end{equation}
Furthermore,
\begin{equation}
\label{eq condition3}
\begin{aligned}
    & \frac{L_f}{\sqrt{\alpha}}\|\bar{Y}\|_{*, \beta, A,[T-h, T]}^2+\|\bar{U}\|_{*, \beta, p,[T-h, T]}^2+\frac{L_g}{\sqrt{\alpha}}\|\bar{Y}\|_{*, \beta, W,[T-h, T]}^2+\|\bar{Z}\|_{*, \beta, W,[T-h, T]}^2 \\ 
    &\leq \frac{1}{2} \left(\frac{L_f}{\sqrt{\alpha}}\|\bar{P}\|_{*, \beta, A,[T-h, T]}^2+\|\bar{Q}\|_{*, \beta, p,[T-h, T]}^2+\frac{L_g}{\sqrt{\alpha}}\|\bar{P}\|_{*, \beta, W,[T-h, T]}^2+\|\bar{R}\|_{*, \beta, W,[T-h, T]}^2\right).
\end{aligned}
\end{equation}

Therefore, $\Gamma$ defines a strict contraction map on the time interval $[T-h,T]$ with respect to the equivalent norm to $\|\cdot\|_{\mathbb L^{\beta}}$,
$$
\|(Y, U, Z)\|_{\mathbb{L}^{\beta},*,\alpha}^2:=\frac{L_f}{\sqrt{\alpha}}\|Y\|_{*, \beta, A}^2+\|{U}\|_{*, \beta, p}^2+\frac{L_g}{\sqrt{\alpha}}\|Y\|_{*, \beta, W}^2+\|{Z}\|_{*, \beta, W}^2,
$$
which implies the existence and uniqueness of local solution on $[T-h,T]$ due to the completeness of the space. 

Next we stitch the local solutions to get the global solution on $[0,T]$. More precisely, choose  $h$ such that (\ref{eq condition}) holds and $T=n h$ for some integer $n$. Then  the BSDE (\ref{eq3}) with mean reflection admits a unique deterministic flat solution $\left(Y^n, U^n, Z^n, K^n\right) \in \mathbb{L}^\beta_{[T-h, T]} \times \mathcal{A}_D(T-h, T)$ on the time interval $[T-h, T]$. Next we take $T-h$ as the terminal time and $Y_{T-h}^n$ as the terminal condition. We can find the unique deterministic flat solution of the BSDE (\ref{eq3}) with mean reflection $\left(Y^{n-1}, U^{n-1}, Z^{n-1}, K^{n-1}\right)$  on the time interval $[T-2 h, T-h]$. 
Repeating this procedure, we get a sequence $\left(Y^i, U^i, Z^i, K^i\right)_{i \leq n}$. This procedure works since for $\beta$ satisfying (\ref{eq condition}) and $h$ chosen before, for each $1\le i\le n$, 
 \begin{equation}
    \beta > 256\kappa^4 [3(L_f + L_g) + 2(L_p^2 + L_w^2)]\left[\left( L_f+ L_g \right)\mathbb{E}\left[ \int_{(i-1)h}^{ih} e^{\beta (A_s+ s)} (d A_s+ d s)\right]+1 \right]^2.
\end{equation}
It turns out that on $[(i-1)h,ih]$, we are able to choose $\alpha^{(i)}$ such that,
\begin{equation}
\begin{aligned}
    &\frac{L_f}{\sqrt{\alpha^{(i)}}}\|\bar{Y}\|_{*, \beta, A,[(i-1)h, ih]}^2+\|\bar{U}\|_{*, \beta, p,[(i-1)h, ih]}^2+\frac{L_g}{\sqrt{\alpha^{(i)}}}\|\bar{Y}\|_{*, \beta, W,[(i-1)h, ih]}^2+\|\bar{Z}\|_{*, \beta, W,[(i-1)h, ih]}^2 \\ 
    &\leq \frac{1}{2} \left(\frac{L_f}{\sqrt{\alpha^{(i)}}}\|\bar{P}\|_{*, \beta, A,[(i-1)h, ih]}^2+\|\bar{Q}\|_{*, \beta, p,[(i-1)h, ih]}^2+\frac{L_g}{\sqrt{\alpha^{(i)}}}\|\bar{P}\|_{*, \beta, W,[(i-1)h, ih]}^2+\|\bar{R}\|_{*, \beta, W,[(i-1)h, ih]}^2\right),
\end{aligned}
\end{equation}
which implies the local well-posedness on $[(i-1)h,ih]$.

We stitch the sequence as:
$$
Y_t=\sum_{i=1}^n Y_t^i I_{[(i-1) h, i h)}(t)+Y_T^n I_{\{T\}}(t), \ U_t=\sum_{i=1}^n U_t^i I_{[(i-1) h, i h)}(t)+U_T^n I_{\{T\}}(t), \ Z_t=\sum_{i=1}^n Z_t^i I_{[(i-1) h, i h)}(t)+Z_T^n I_{\{T\}}(t),
$$
as well as
$$
K_t=K_t^i+\sum_{j=1}^{i-1} K_{j h}^j, \quad \text { for } t \in[(i-1) h, i h],  i \leq n.
$$
 It is  obvious that $(Y, U, Z, K) \in L^{2, \beta}(A) \cap$ $L^{2, \beta}(W) \times L^{2, \beta}(p) \times L^{2, \beta}(W) \times \mathcal{A}_D$ is a deterministic flat solution to the BSDE (\ref{eq3}) with mean reflection.
The uniqueness of the global solution follows from the uniqueness of local solution on each small time interval. The proof is complete.

\end{proof}

\section{Application}
\label{sec app}
BSDEs with mean reflection provide a tool for super-hedging under a running risk constraint, as in \cite{briand2018bsdes}. Inspired by \cite[\S 7.2]{delong2013backward}, in this section, we present a more general example in which a mean reflected BSDE driven by a marked point process is used for super-hedging in the financial and insurance market.

In order to measure the risk of a portfolio, the so-called risk measures are  introduced, see e.g. \cite{artzner1999coherent}. For a fixed $t$, define a static risk measure as a map $\rho(t,):. L^{2, \beta}(A) \cap L^{2, \beta}(W) \longrightarrow \mathbb{R}$ satisfying $\rho(t, 0)=0$ together with
\begin{itemize}
    \item Monotonicity: $X \leq Y \Longrightarrow \rho(t, X) \geq \rho(t, Y)$, for $X, Y \in L^{2, \beta}(A) \cap L^{2, \beta}(W)$;
    \item Translation invariance: $\rho(t, X+m)=\rho(t, X)-m$, for $X \in L^{2, \beta}(A) \cap L^{2, \beta}(W)$ and $m \in \mathbb{R}$.
\end{itemize}

Besides, risk measures can similarly be characterized by an acceptance set, which is defined as
$$
\mathcal{A}_\rho^t=\left\{X \in L^{2, \beta}(A) \cap L^{2, \beta}(W): \rho(t, X) \leq 0\right\}.
$$
Given a set $\mathcal{A}^t$,
$$
\rho(t, X)=\inf \left\{m \in \mathbb{R}: m+X \in \mathcal{A}^t\right\}
$$
is a static risk measure with $\mathcal A^t=\mathcal A^t_{\rho}$. Notice that, the acceptance set $\mathcal{A}^t$ and the risk measure $\rho(t,.)$ share a one to one correspondence. As in \cite{briand2018bsdes}, for a given collection of static risk measures $(\rho(t, .))_t$, a wealth process $Y$ is admissible  as soon as it satisfies
\begin{equation}
\label{eq risk constraint}
    \rho\left(t, Y_t\right) \leq c_t, \quad 0 \leq t \leq T,
\end{equation}
where $c$ is a given time indexed deterministic benchmark. Consistently, the Skorohod type condition is generalized to
$$
\int_0^T\left[c_t-\rho\left(t, Y_t\right)\right] d K_t=0.
$$
In view of \cite[Theorem 13]{briand2018bsdes}, the following  similar theorem enables us to consider BSDEs under risk measure constraint of the form (\ref{eq risk constraint}).
\begin{thm}
\label{thm3}
  Let $\rho(t,):[0, T] \times L^{2, \beta}(A) \cap L^{2, \beta}(W) \longrightarrow \mathbb{R}$ be a collection of monotonic and translation invariant risk measures, which are continuous with time and Lipschitz in space, i.e.
$$
|\rho(t, X)-\rho(t, Y)| \leq \kappa \mathbb{E}[|X-Y|], \quad 0 \leq t \leq T, \quad X, Y \in L^{2, \beta}(A) \cap L^{2, \beta}(W).
$$
Moreover, given a continuous deterministic benchmark $c$ and $\xi$ satisfying $\rho(T, \xi) \leq c_T$. Let assumptions $(\mathbf{I})$, $\left(\mathbf{II}\right)$ and $\left(\mathbf{V}\right)$ hold for some $\beta$ satisfying  $\beta > 256 \kappa^4 [3(L_f + L_g) + 2(L_p^2 + L_w^2)]$, then the "BSDE with risk measure reflection"
\begin{equation}
\label{eq BSDE with risk measure}
\left\{\begin{array}{l}
Y_t =\xi+\int_t^T f\left(s, Y_s, U_s\right) d A_s+\int_t^T g\left(s, Y_s, Z_s\right) d s\\
\quad \quad-\int_t^T \int_E U_s(e) q(d s d e)-\int_t^T Z_s d W_s + \left(K_T-K_t\right) , \quad \forall t \in[0, T] \text { a.s. };\\
\rho\left(t, Y_t\right) \leq c_t, \quad 0 \leq t \leq T, \quad \int_0^T\left[c_t-\rho\left(t, Y_t\right)\right] d K_t=0 ,
\end{array}\right.
\end{equation}
admits a unique deterministic flat solution.
\end{thm}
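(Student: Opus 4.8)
The plan is to reduce the risk-measure-reflected problem to the mean-reflected framework of Theorems \ref{thm1} and \ref{thm2} by replacing the operator $L_t$ with the one naturally attached to the constraint \eqref{eq risk constraint}. Concretely, for each $t\in[0,T]$ and $\eta\in L^{2,\beta}(A)\cap L^{2,\beta}(W)$ I would set
$$
L_t(\eta)=\inf\{x\ge 0:\ \rho(t,x+\eta)\le c_t\}.
$$
Translation invariance gives $\rho(t,x+\eta)=\rho(t,\eta)-x$, so the defining set is $[(\rho(t,\eta)-c_t)^+,\infty)$ and hence $L_t(\eta)=(\rho(t,\eta)-c_t)^+$. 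In particular $L_t$ is well-defined and finite, and the compatibility hypothesis $\rho(T,\xi)\le c_T$ forces $L_T(\xi)=0$, so the constraint is automatically met at the terminal time (this is the role played here by $\mathbb{E}[\ell(T,\xi)]\ge0$ in the original setting).

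Next I would establish the Lipschitz estimate that drives every subsequent step. Since $x\mapsto x^+$ is $1$-Lipschitz and $\rho(t,\cdot)$ is $\kappa$-Lipschitz in the $\mathbb{E}[|\cdot|]$ sense,
$$
|L_t(\eta^1)-L_t(\eta^2)|\le|\rho(t,\eta^1)-\rho(t,\eta^2)|\le\kappa\,\mathbb{E}[|\eta^1-\eta^2|],
$$
which is precisely \eqref{eqL}. Because the representation Lemma \ref{lemma rep} and the small-interval contraction in the proof of Theorem \ref{thm2} use the loss function only through this inequality and through the representation $Y_t=y_t+\sup_{t\le s\le T}L_s(y_s)$, they carry over verbatim with the \emph{same} threshold $\beta>256\kappa^4[3(L_f+L_g)+2(L_p^2+L_w^2)]$; no constant needs to be re-tuned.

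For existence I would solve the associated standard BSDE \eqref{eq samll} to obtain $(y,u,z)$, set $k_t=\sup_{0\le s\le T}L_s(y_s)-\sup_{t\le s\le T}L_s(y_s)$ and $Y_t=y_t+(k_T-k_t)$, and verify admissibility through translation invariance:
$$
\rho(t,Y_t)=\rho(t,y_t)-\sup_{t\le s\le T}L_s(y_s)\le\rho(t,y_t)-L_t(y_t)=\rho(t,y_t)-(\rho(t,y_t)-c_t)^+\le c_t.
$$
The right-continuity of $s\mapsto L_s(y_s)$, previously supplied by Assumption $(\mathbf{III})$, must now be recovered from the continuity of $\rho$ in time together with the c\`adl\`ag property of $y$ and the a priori bound of Lemma \ref{A priori estimate}, which yields uniform integrability and hence $y_s\to y_t$ in $L^1$ as $s\downarrow t$. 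Flatness and uniqueness then follow the Step 2 argument of Theorem \ref{thm1}, with the strict monotonicity of $\ell$ replaced by translation invariance: if two deterministic flat solutions differ by a positive deterministic gap $Y^1_{t^-}=Y^2_{t^-}+m$ with $m>0$ on an interval, then $\rho(t,Y^1_{t^-})=\rho(t,Y^2_{t^-})-m<c_t$, the constraint has strict slack there, and the generalized Skorokhod condition $\int_0^T[c_t-\rho(t,Y_t)]dK_t=0$ forces $dK^1=0$, producing the same contradiction.

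The main obstacle I anticipate is not the algebra but the two verifications that make the reduction legitimate: that $L_t$ inherits \emph{exactly} the constant $\kappa$, so that the delicate $\beta$-threshold and the contraction estimate \eqref{eq relation} are literally unchanged, and the right-continuity of $s\mapsto L_s(y_s)$ in the absence of Assumption $(\mathbf{III})$, where the interplay of the time-continuity of $\rho$, the c\`adl\`ag regularity of $y$, and the $S^2_*$ a priori estimate is essential.
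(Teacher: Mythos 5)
Your proposal is correct and takes essentially the same route as the paper, whose entire proof is the one-line instruction to replace the map $L_t$ by $\rho(t,\cdot)-c_t$ and rerun the arguments of Theorems \ref{thm1} and \ref{thm2}. You in fact supply more detail than the paper does --- the explicit formula $L_t(\eta)=(\rho(t,\eta)-c_t)^+$ via translation invariance, the verbatim inheritance of the Lipschitz constant $\kappa$ (hence of the $\beta$-threshold), and the right-continuity and uniqueness checks without Assumption $(\mathbf{III})$ --- all of which are consistent with the intended reduction.
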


\begin{proof}
    Replace the map $L_t$  by $\rho(t,.)-c_t$, for any $t \in[0, T]$ and then we can follow the proof above to get the well-posedness of (\ref{eq BSDE with risk measure}). 
\end{proof}

For the sake of financial applications, a typical choice of $\rho$ which is Lipschitz with respect to $X$,  is the classical Expected Shortfall risk measure defined as
$$
\rho_\alpha^{E S}(t, X):=\frac{1}{\alpha_t} \int_0^{\alpha_t} \operatorname{VaR}_s(X) d s,
$$
where $\alpha_t \in(0,1)$ denotes a given precision level and $V a R_s$ is the Value at Risk of level $s$.

Next we discuss a super-hedging problem in the financial and insurance market. Assume the dynamics of the bank account $S^0:=\left\{S^0_t\right\}_{ 0 \leq t \leq T}$ is
\begin{equation}
\label{eq bank}
    \frac{d S^0_t}{S^0_t}=r_t d t, \quad S^0_0=1,
\end{equation}
where $r:=\{r_t\}_{0 \leq t \leq T}$ is deterministic and denotes the risk-free rate. The stock price $S:=\left\{S_t\right\}_{ 0 \leq t \leq T}$ is described by the following geometric Brownian motion,
\begin{equation}
\label{eq stock}
    \frac{d S_t}{S_t}=\mu_t d t+\sigma_t d W_t, \quad S_0=s>0,
\end{equation}
where $\mu:=\left\{\mu_t\right\}_{ 0 \leq t \leq T}$ denotes the expected return on the stock and $\sigma:=\left\{\sigma_t\right\}_{0 \leq t \leq T}$ denotes the stock volatility. We assume that the drift $\mu$ and the volatility $\sigma$ are bounded predictable processes.  For the sake of market completeness, assume in addition that $\sigma_t \sigma_t^{\prime}-\varepsilon I \succeq 0$ for some $\epsilon>0$. Consider the following life insurance portfolio consisting of $n$ persons insured whose death is triggered by a marked point process  $p$, with compensator  $\nu(dtde)=(n-N_{t^-})\lambda_t(e)\tilde p(e)\delta_{\{e\}}(de)dt$. 
Here $\lambda(\cdot):[0, T] \rightarrow(0, \infty)$ denotes a (deterministic) mortality intensity.
 We attribute the death of people to different causes such as  natural death, traffic accident, sudden illness and etc. Then each death is marked by an element in the mark space $E$, which is assumed finite. Each situation $e\in E$ occurs with probability $\tilde p(e)$.
The running cash flow of the life insurance portfolio reads:
\begin{equation}
\label{eq insurance}
\begin{aligned}
P_t= & \int_0^t(n-N_s) {H}_s d s+\int_0^t\int_{E}{G}_s(e)p(dsde), \quad 0 \leq t \leq T,
\end{aligned}
\end{equation}
in which  $H$ is a deterministic process representing the insurance premium received continuously during the period of the contract, and $G$ is a deterministic death benefit paid at random times triggered by the marked point process $p$. Moreover, the corresponding counting process $N$ takes the form,
$$
N_t=\sum_{i=1}^n \mathbf{1}\left\{\tau_i \leq t\right\}, \quad 0 \leq t \leq T,
$$
where $\left(\tau_i, i=1, \ldots, n\right)$ is a sequence of random variables which are, conditional on the filtration $\mathscr{F}$, independent and exponentially distributed
$$
\mathbb{P}\left(\tau_i>t \mid \mathscr{F}_t\right)=e^{-\int_0^t\sum_{e\in E}\tilde p(e)\lambda_s(e) d s}, \quad i=1, \ldots, n.
$$
Properties of the point process $N$ are studied by Jeanblanc and Rutkowski \cite{Jeanblanc2002} in a credit risk context and Dahl and Møller \cite{DAHL2006193} in a life insurance context.

Next, we consider an insurance company who faces the cash flow generated by the insurance (\ref{eq insurance}) and invests in the bank account (\ref{eq bank}) and the stock (\ref{eq stock}). To hedge the  mortality risk and build a super-hedging portfolio, they also invests mortality bond which pays a unit for each insured person who survives till the maturity of the contract. 
For a given initial capital $x$, we consider portfolios $X^{x, \pi,\chi, K}$ driven by a consumption-investment strategy $(\pi,\chi, K)$, and whose dynamics is given by
$$
\begin{aligned}
d X_t^{x, \pi,\chi, K} & = \pi_t(\mu_t d t+\sigma_t d W_t)+\chi_t\frac{dD_t}{D_{t^-}}+\left(X_t^{x, \pi,\chi, K}-\pi_t-\chi_t\right) r_t d t-dP_t-d K_t
\end{aligned}
$$
where $\pi$ and $\chi$ denote the amount of wealth invested in the stock $S$ and in the mortality bond $D$, respectively. Here $K$ is a dynamic risk premium due to a running risk constraint.

Consider pricing measure $\mathbb Q$ which satisfies:
$$
\mathbb{Q} \sim \mathbb{P},\ \frac{d \mathbb{Q}}{d \mathbb{P}} \mid \mathscr{F}_t=M_t,\ 0 \leq t \leq T,\
M \text { is a positive } \mathscr{F} \text {-martingale satisfying},
$$
$$
\frac{d M_t}{M_{t^-}}= -\theta_t d W_t+\int_E \kappa_t(e) q(dtde), \quad M(0)=1,
$$
where $\theta_t=\frac{\mu_t-r_t}{\sigma_t}$, and  $\kappa$ is a predictable process such that
$$
\begin{aligned}
&\mathbb E\left[ \int_0^T \sum_{e\in E}|\kappa_t(e)|^2 \lambda_t(e)\tilde p(e)d t\right]<\infty, \\
& \kappa_t(e)>-1, \quad\forall (t, e) \in[0, T] \times E .
\end{aligned}
$$
The processes $\kappa$ is called the market price of the insurance risk or the risk premium required by investors for taking, respectively, the unsystematic insurance risk.

Then, making use of a similar argument as \cite[Proposition 9.4.1]{delong2013backward}, under the additional assumptions that $\lambda$ is continuous in $t$ and $\kappa$ is bounded, the price of the mortality bond, $D_t=\mathbb{E}^{\mathbb{Q}}\left[e^{-\int_t^T r_s ds}(n-N_T) \mid \mathscr{F}_t\right], \quad 0 \leq t \leq T$,  satisfies the dynamics,
$$
\begin{aligned}
d D_t= D_{t^-} \mathbf{1}\{n-N_{t^-}>0\}\left(\left(r_t+\sum_{e\in E}(1+\kappa_t(e)) \lambda_t(e)\tilde p(e)\right) d t-\frac{1}{n-N_{t^-}} d N_t\right), \quad 0 \leq t \leq T.
\end{aligned}
$$
Then, 
\begin{equation}
\label{wealth}
\begin{aligned}
d X^{x,\pi, \chi, K}_t= & \pi_t \frac{d S_t}{S_t}+\chi_t \frac{d D_t}{D_{t^-}}+\left(X^{x,\pi, \chi, K}_{t}-\pi_t-\chi_t\right) \frac{d S^0_t}{S^0_t}-dP_t-dK_t \\
= & \pi_t(\mu_t d t+\sigma_t d W_t) +\chi_t \mathbf{1}\{n-N_{t^-}>0\}\left(\left(r_t+\sum_{e\in E} (1+\kappa_t(e)) \lambda_t(e)\tilde p(e)\right) d t-\frac{1}{n-N_{t^-}} d N_t\right)\\
& +\left(X^{x,\pi, \chi,K}_{t^-}-\pi_t-\chi_t \mathbf{1}\{n-N_{t^-}>0\}\right) r_td t-dP_t-dK_t\\
=& \left(r_tX^{x,\pi, \chi, K}_{t}+\pi_t\mu_t-\pi_tr_t-(n-N_t)H_t+\chi_t \mathbf{1}\{n-N_{t^-}>0\}\sum_{e\in E} (1+\kappa_t(e)) \lambda_t(e)\tilde p(e)\right)dt \\
&\quad -\int_E\left(\frac{\chi_t}{n-N_{t^-}} \mathbf{1}\{n-N_{t^-}>0\}+G_t(e)\right)p(dtde)-dK_t,\quad X^{x,\pi, \chi,K}(0)=x .
\end{aligned}
\end{equation}

In this paper, a strategy $(\pi,\chi,K)$  is considered admissible if and only if it satisfies the following constraint:
$$
\rho_\alpha^{E S}\left(t, X_t^{x, \pi,\chi, K}\right) \leq c_t, \quad 0 \leq t \leq T,
$$
where $(\alpha, c)$ are deterministic quantile and level benchmarks. The collection of admissible policies is denoted by $\mathcal A$. The goal is looking for a super hedging price in the collection
$$
\left\{x \in \mathbb{R}, \quad \exists(\pi,\chi, K) \in \mathcal{A}, \quad \text { s.t. } \quad X_T^{x, \pi,\chi, K} \geq  (n-N_T) {F} \quad \text { and } \quad \rho_\alpha^{E S}\left(t, X_t\right) \leq c_t, \quad \forall t \in[0, T]\right\},
$$
and associated consumption-investment strategy, where the random variable $F$ is a survival benefit paid at the end of the contract. Applying the results of this paper, we deduce that, taking only deterministic $K$ strategies into consideration, an admissible super-hedging price $Y_0$ is well defined as the starting point of the unique deterministic flat solution to the following BSDE with risk measure reflection
\begin{equation}
\left\{\begin{array}{l}
Y_t= (n-N_T) F
+\int_t^T\left(-Y_s r_s-\theta_s Z_s+H_s(n-N_{s})+(n-N_{s^-})\sum_{e\in E}G_s(e)(1+\kappa_s(e))\lambda_s(e)\tilde p(e)\right.\\
\quad\quad\quad\quad\quad\quad\quad\quad\quad\left.+(n-N_{s^-})\sum_{e\in E}  U_s(e) \kappa_s(e) \lambda_s(e)\tilde p(e)
\right) d s \\
\quad\quad\quad-\int_t^T Z_s d W_s-\int_t^T\int_{E} U_s(e)q(dsde)+K_T-K_t, \quad 0 \leq t \leq T,\\
\rho_\alpha^{E S}\left(t, Y_t\right) \leq c_t, \quad 0 \leq t \leq T, \quad \int_0^T\left[c_t-\rho_\alpha^{E S}\left(t, Y_t\right)\right] d K_t=0.
\end{array}\right.
\end{equation}
which can be immediately derived from the wealth process (\ref{wealth}) by introducing the variables
$$
\begin{aligned}
& Y_t=X^{x,\pi, \chi,K}_t, \quad 0 \leq t \leq T, \\
& Z_t=\pi_t \sigma_t, \quad 0 \leq t \leq T, \\
& U_t(e)=\frac{-\chi_t}{n-N_{t^-}} \mathbf{1}\{n-N_{t^-}>0\}-G_t(e), \quad 0 \leq t \leq T,\ e\in E,
\end{aligned}
$$
where the equations represent "in the same equivalence class". Then, with proper integrability conditions on $F,\ G$ and $H$, this mean reflected BSDE satisfies all assumptions in Theorem \ref{thm3} and is well-posed.

\bibliographystyle{plain}
\bibliography{MPP_RBSDE}

\end{document}